\newtheorem{theorem}{Theorem}[section]
\newtheorem{lemma}[theorem]{Lemma}
\newtheorem{prop}[theorem]{Proposition}
\newtheorem{cor}[theorem]{Corollary}
\theoremstyle{definition}
\newtheorem{definition}[theorem]{Definition}
\newtheorem{algorithm}[theorem]{Algorithm}
\newtheorem{example}[theorem]{Example}
\newtheorem{remark}[theorem]{Remark}
\renewcommand{\subset}{\subseteq}
\renewcommand{\epsilon}{\varepsilon}
\newcommand{\abs}[1]{\left|#1\right|}                   
\newcommand{\absf}[1]{|#1|}                             
\newcommand{\E}{\mathbb{E}}
\renewcommand{\P}{\mathbb{P}}
\newcommand{\R}{\mathbb{R}}
\newcommand{\cM}{\mathcal{M}}
\newcommand{\cN}{\mathcal{N}}
\newcommand{\embolden}[1]{\textbf {#1}}
\begin{document}

\title{Tree/Endofunction Bijections and\\ Concentration Inequalities}

%
%
\author{Steven Heilman}
\address{Department of Mathematics, University of Southern California, Los Angeles, CA 90089-2532}
\email{stevenmheilman@gmail.com}
\date{\today}
\thanks{Supported by NSF Grants DMS 1839406 and CCF 1911216.}
\subjclass[2010]{60C05,05C80,60E15,05C69}
\keywords{random tree, random mapping, endofunction, bijection, concentration of measure}

\begin{abstract}
We demonstrate a method for proving precise concentration inequalities in uniformly random trees on $n$ vertices, where $n\geq1$ is a fixed positive integer.  The method uses a bijection between mappings $f\colon\{1,\ldots,n\}\to\{1,\ldots,n\}$ and doubly rooted trees on $n$ vertices.  The main application is a concentration inequality for the number of vertices connected to an independent set in a uniformly random tree, which is then used to prove partial unimodality of its independent set sequence.  So, we give probabilistic arguments for inequalities that often use combinatorial arguments.
\end{abstract}
\maketitle

\setcounter{tocdepth}{1}
\tableofcontents
%
%

\section{Introduction}\label{secintro}

Let $f\colon\{1,\ldots,n\}\to\{1,\ldots,n\}$ be a mapping with associated directed graph $G(f)$ with vertices $\{1,\ldots,n\}$ and directed edges $\{(x,f(x))\colon 1\leq x\leq n\}$.  It is well known that $G(f)$ can be written as a union of trees connected to the cyclic components of $f$.  Deleting or rearranging some edges within the cycles of $G(f)$ can then produce a tree.  For example, we could remove one edge from each cycle in $G(f)$ and then string each of these cycles together, while maintaining the structure of all non-cyclic vertices (see Figure \ref{figone} below).  If the cycles are connected in a reversed order according to their smallest elements (as in Figure \ref{figtwo}), then we get a bijection $R$ between mappings $f\colon\{1,\ldots,n\}\to\{1,\ldots,n\}$ and doubly rooted trees on $n$ vertices.  The two roots in the tree correspond to the beginning and the end of the path of cycles, respectively.  This bijection requires deleting about $\log n$ edges from the directed mapping graph $G(f)$, with high probability (see Lemma \ref{uniformcycle}).  Consequently, random quantities depending on edges in $G(f)$ are essentially the same after the bijection $R$ is applied to $G(f)$.  So, e.g. a concentration inequality for $G(f)$ depending on edges applies essentially unchanged to the image of $G(f)$ under $R$.  Thus, the existence of the bijection $R$ (Theorem \ref{thm1}) can lead to concentration inequalities for random trees (Lemma \ref{mainapp}.)

The main point of this paper is a demonstration of a method for proving concentration inequalities on random trees by first proving an inequality for random mappings and then transferring that inequality to random trees via the bijection $R$.

\subsection{History of the R\'{e}nyi-Joyal Bijection}

Joyal's bijection \cite{joyal81} between mappings from $\{1,\ldots,n\}$ to itself and doubly rooted trees on $n$ vertices used any bijection between linear orders and permutations, when specifying the action of the mapping bijection on the core of the mapping.  That is, if $S_{n}$ denotes the set of permutations on $n$ elements, then any bijection $\pi\colon S_{n}\to S_{n}$ yields a corresponding lifted bijection $R_{\pi}$ between mappings from $\{1,\ldots,n\}$ to itself and doubly rooted trees on $n$ vertices.  Choosing $\pi$ to be the R\'{e}nyi bijection \cite[page 11]{renyi62} between linear orderings and permutations, described in the previous paragraph by arranging the cycles of a permutation in reverse order of their smallest elements, seems most natural.  R\'{e}nyi's bijection $\pi$ \cite{renyi62} is often attributed to Foata \cite{foata65}, e.g. it is referred to as Foata's transition lemma in \href{https://en.wikipedia.org/wiki/Permutation}{https://en.wikipedia.org/wiki/Permutation}, though R\'{e}nyi's preceding publication \cite{renyi62} was pointed out by Stanley \cite[page 106]{stanley12}.  Given a doubly rooted tree $(T,r_{1},r_{2})$, with $r_{1},r_{2}\in\{1,\ldots,n\}$, let $\rho(T,r_{1},r_{2})\colonequals T$ be the un-rooted tree $T$.  We then study the properties of un-rooted tree $\rho(R_{\pi}(f))$.

By adding an additional randomization to the Joyal bijection, the authors of \cite{aldous04,aldous05} (and also \cite{aldous04a}) define a coupling between random walks on mapping directed graphs and random walks on trees.  Their coupling also works for non-uniform satisfying $\P(f(x)=i)=p_{i}$ for all $1\leq x\leq n$, where $\sum_{i=1}^{n}p_{i}=1$ and $0\leq p_{i}\leq 1$ for all $1\leq i\leq n$.
%

\subsection{Concentration Inequalities on Random Trees}

Alternative approaches to proving concentration inequalities on random trees include:
\begin{itemize}
\item Proving central limit theorems as in \cite{wagner15,wagner19,isaev19}, or
\item Using martingales \cite{isaev19} (e.g. the Aldous-Broder algorithm \cite{broder89,aldous90}) and the Azuma-Hoeffding inequality, Lemma \ref{azuma}.
\end{itemize}
The former approach does not necessarily prove concentration inequalities, so it seems unsuitable for our application.  The latter approach proves concentration, but it is often sub-optimal, since the Azuma-Hoeffding inequality is not sharp for random quantities with small expected value.  One might hope to somehow use Talagrand's convex distance inequality (Theorem \ref{talagrand}) in place of the Azuma-Hoeffding inequality, but Talagrand's inequality requires independence, so it might not be clear how to apply it to random trees.

For more on random mappings, see \cite{stepanov69,kolchin86} and the references therein.

\subsection{Our Contribution}

Below, we let $E(\cdot)$ denote the undirected edges of a graph, we let $\Delta$ denote the symmetric difference of sets, and we let $c(f)$ denote the number of cycles of a mapping $f\colon\{1,\ldots,n\}\to\{1,\ldots,n\}$.

\begin{theorem}\label{thm1}
There exists a bijection $R$ from the set
$$\{f\colon\{1,\ldots,n\}\to\{1,\ldots,n\}\}$$
to the set of doubly rooted trees on $n$ vertices such that for all maps $f$
$$\abs{E(R(f))\,\Delta\,E(G(f))}\leq 2c(f).$$
\end{theorem}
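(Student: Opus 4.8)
The plan is to construct the bijection $R$ explicitly by following the informal recipe described in the introduction, and then to bound the edge symmetric difference directly from the construction. First I would recall the canonical decomposition of the mapping graph $G(f)$: the vertices lying on cycles form the \emph{core} of $f$, and every non-core vertex hangs in a rooted forest whose trees are attached to core vertices. The edges of $G(f)$ split into two disjoint classes, namely the \emph{cyclic edges} (those lying on some cycle of $f$) and the \emph{non-cyclic edges} (those in the attached forest). The total number of cyclic edges equals the number of core vertices, which I will call $k$, distributed among $c(f)$ cycles.

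The construction of $R$ proceeds by leaving the non-cyclic (forest) edges completely untouched and rearranging only the cyclic edges. Concretely, I would write the core as a permutation $\sigma$ of the core set, list its cycles, and apply the R\'enyi--Foata transition: order the cycles in reverse order of their least elements, then within each cycle start the cyclic word at its least element, and finally read off the resulting linear word $w_1,\ldots,w_k$. This word is interpreted as a path in the image tree, with directed edges $w_i \to w_{i+1}$, and the two endpoints $w_1$ and $w_k$ become the two roots $r_1,r_2$. Since the R\'enyi--Foata map is a bijection between permutations and linear orders on the same ground set, and the forest structure is preserved verbatim, the composite map $R$ is a bijection onto doubly rooted trees; I would verify bijectivity by describing the inverse (read the path between the two roots, then reconstruct cycles by cutting the word at each left-to-right minimum).

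To bound $\abs{E(R(f))\,\Delta\,E(G(f))}$, I would observe that only cyclic edges are affected, so the symmetric difference is contained in the union of the $k$ old cyclic edges of $G(f)$ and the $k-1$ new path edges of $R(f)$, giving a crude bound of $2k-1$. The sharper bound $2c(f)$ comes from noting that most of these edges actually coincide. The key observation is that within a single cycle, once we have chosen its starting point (the least element), the R\'enyi--Foata reading reproduces all of that cycle's internal directed edges \emph{except} the one edge that closed the cycle back to its least element; thus each cycle contributes at most $2$ edges to the symmetric difference, namely the one deleted closing edge and the one newly created edge linking this cycle's final vertex to the next cycle's least element. Summing over the $c(f)$ cycles yields the claimed bound $2c(f)$.

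The main obstacle I anticipate is the careful edge-accounting in the previous paragraph, since the symmetric difference is about \emph{undirected} edges $E(\cdot)$ while the construction is naturally phrased with directed edges, so coincidences between a kept edge and a newly created edge, or cancellations where a deleted and an added edge happen to be the same undirected pair, must be handled so as not to overcount. I would make this rigorous by tracking, for each cycle, exactly which directed arcs survive as undirected edges and which are replaced, and by confirming that the ``stitching'' edge between consecutive cycles together with the lone deleted closing edge per cycle are the only possible new contributions; the telescoping of the stitching edges across the path is what keeps the per-cycle cost at $2$ rather than letting the linking edges accumulate separately. Verifying that no additional undirected edge appears or disappears through accidental coincidences is the delicate step that makes the constant $2$ (rather than a larger constant) correct.
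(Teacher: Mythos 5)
Your proposal follows essentially the same route as the paper's own proof (Lemma \ref{arratiabij}): keep the non-core forest edges verbatim, write each core cycle starting at its least element, concatenate the cycles in reverse order of their minima into a path whose two endpoints become the roots, and invert by cutting the root-to-root path back into cycles, with the per-cycle accounting (one deleted closing edge plus at most one stitching edge) giving the bound $2c(f)$. The approach, the construction, and the edge count all match the paper, so your plan is correct and essentially identical to it.
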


\subsection{The Main Application}

The following Chernoff-type bounds are used in \cite{heilman20} to prove partial unimodality of the independent set sequence of uniformly random labelled trees, with high probability as the number of vertices $n$ goes to infinity.  An independent set in a graph is a subset of vertices no two of which are connected by an edge.

\begin{lemma}[\embolden{Main Application to Independent Sets}]\label{mainapp}
Let $S\subset\{1,\ldots,n\}$.  Let $T$ be a uniformly random tree on $n$ vertices, conditioned on $S$ being an independent set.  Let $N$ be the number of vertices in $S^{c}$ not connected to $S$.  Let $\alpha\colonequals \abs{S}/n$.  Then
$$\P(\abs{N-\E N}>s\E N+1)\leq
e^{-\min(s,s^{2})n(1-\alpha)^{2}e^{-\alpha/(1-\alpha)}/3},\qquad\forall\,\,s>0.
$$
More generally,
$$\P(N<(1-s)\E N-1)\leq
e^{-s^{2}n(1-\alpha)^{2}e^{-\alpha/(1-\alpha)}/2},\qquad\forall\,\,0<s<1
$$
$$\P(N>(1+s)\E N+1)\leq
e^{-s^{2}n(1-\alpha)^{2}e^{-\alpha/(1-\alpha)}/(2+s)},\qquad\forall\,\,s\geq0.
$$
\end{lemma}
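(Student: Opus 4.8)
The plan is to establish the bound first for a uniformly random \emph{mapping}, where the defining coordinates are genuinely independent, and then to transport it to trees through the bijection $R$ of Theorem~\ref{thm1}. So let $f$ be uniform on maps $\{1,\ldots,n\}\to\{1,\ldots,n\}$ conditioned on $S$ being independent in $G(f)$. Since $S$ is independent in $G(f)$ exactly when $f(x)\in S^{c}$ for every $x\in S$, this event is a product of coordinatewise constraints, so under the conditioning $(f(1),\ldots,f(n))$ are still independent, with $f(x)$ uniform on $S^{c}$ for $x\in S$ and $f(y)$ uniform on $\{1,\ldots,n\}$ for $y\in S^{c}$. Write $N=\sum_{y\in S^{c}}I_{y}$ with $I_{y}=A_{y}B_{y}$, where $A_{y}=\mathbf{1}[f(y)\notin S]$ and $B_{y}=\mathbf{1}[\,y\notin\{f(x):x\in S\}\,]$. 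By independence $\E I_{y}=(1-\alpha)\bigl(1-\tfrac{1}{n(1-\alpha)}\bigr)^{\alpha n}$, so $\E N=n(1-\alpha)^{2}\bigl(1-\tfrac{1}{n(1-\alpha)}\bigr)^{\alpha n}$; since $(1-\tfrac1m)^{k}\le e^{-k/m}$ this is at most $n(1-\alpha)^{2}e^{-\alpha/(1-\alpha)}$, and the two differ by only an additive $O(1)$, which is exactly what the ``$\pm1$'' in the thresholds is there to absorb.

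The structural heart of the argument is that $\{I_{y}\}_{y\in S^{c}}$ is \emph{negatively associated}. The family $\{B_{y}\}_{y\in S^{c}}$ is built from the occupancy vector $(O_{y})_{y\in S^{c}}$ recording how many of the values $\{f(x):x\in S\}$ land in each bin $y\in S^{c}$; this multinomial occupancy vector is negatively associated, and $B_{y}=\mathbf{1}[O_{y}=0]$ is a decreasing function of the single coordinate $O_{y}$, so $\{B_{y}\}$ is negatively associated. The family $\{A_{y}\}$ is independent and independent of $\{B_{y}\}$, hence the union $\{A_{y}\}\cup\{B_{y}\}$ is negatively associated; as $I_{y}=A_{y}B_{y}$ is an increasing function of the disjoint pair $(A_{y},B_{y})$, closure of negative association under monotone functions of disjoint blocks gives that $\{I_{y}\}$ is negatively associated. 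Negative association yields $\E e^{t\sum_{y}I_{y}}\le\prod_{y}\E e^{tI_{y}}$ for every real $t$, so the usual Chernoff optimizations produce $\P(N\le(1-s)\E N)\le e^{-s^{2}\E N/2}$ and $\P(N\ge(1+s)\E N)\le e^{-s^{2}\E N/(2+s)}$. Incorporating the additive ``$\pm1$'' to pass from $\E N$ to the explicit $n(1-\alpha)^{2}e^{-\alpha/(1-\alpha)}$, and then combining the two one-sided bounds by a union bound — with the relaxed constant $1/3$ absorbing the resulting constant factors and turning $s^{2}/2$ and $s^{2}/(2+s)$ into $\min(s,s^{2})/3$ — yields the three displayed inequalities. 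It is worth noting that although the introduction flags Talagrand's convex distance inequality (Theorem~\ref{talagrand}) as the natural independence-based device, the additive structure of $N$ lets negative association deliver the sharp Chernoff constants $2$ and $2+s$ directly.

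Finally I transport the estimate to a uniformly random tree conditioned on $S$ independent, and this is where I expect the real difficulty to lie. The obstruction is that $R$ does not preserve the event ``$S$ is independent'': by Theorem~\ref{thm1} it alters at most $2c(f)$ edges, and these lie on the cycles of $G(f)$ and on the central path of $R(f)$, so ``$S$ independent in $G(f)$'' and ``$S$ independent in $\rho(R(f))$'' agree on the entire non-cyclic forest but can disagree on these $O(c(f))$ edges. Because $N$ is edge-local, it changes by at most $2\,\abs{E(R(f))\,\Delta\,E(G(f))}\le 4c(f)$ under the bijection, and the two conditionings likewise differ only through these $O(c(f))$ edges. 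I would therefore couple the two conditioned ensembles through their common non-cyclic part and invoke Lemma~\ref{uniformcycle} to control $c(f)=O(\log n)$ with high probability, so that both the change in $N$ and the mismatch between the conditionings are negligible against the deviation scale $s\,\E N$ in the regime where the stated bounds are non-trivial, and are absorbed into the ``$\pm1$'' and the relaxed constant $1/3$. Making this coupling precise, and checking that the exceptional event supplied by Lemma~\ref{uniformcycle} is itself dominated by the right-hand sides above, is the delicate step; the Chernoff bounds themselves are routine once negative association is established.
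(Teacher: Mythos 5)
Your mapping-side argument coincides with the paper's own: condition on $f(S)\subset S^{c}$ (so the coordinates stay independent), write $N$ as a sum over $x\in S^{c}$ of products of an image-avoidance indicator and an out-edge indicator, prove negative association via the occupancy vector and the closure properties of NA families, and run the Chernoff argument through the NA moment-generating-function inequality; this is exactly the paper's proof, including the expectation computation of Proposition \ref{treeprop1}. The genuine gap is the transfer to trees. You transport the bound with the \emph{unrestricted} bijection $R$ of Theorem \ref{thm1}, observe correctly that it neither preserves the event ``$S$ is independent'' nor controls the change in $N$ by better than $O(c(f))$, and propose to repair this by coupling the two conditioned ensembles and invoking Lemma \ref{uniformcycle} to force $c(f)=O(\log n)$. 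This repair cannot be made to work: the tail bound of Lemma \ref{uniformcycle} is of the form $e^{-c_{t}\log n}=n^{-c_{t}}$, i.e.\ only \emph{polynomially} small in $n$, whereas the right-hand sides of Lemma \ref{mainapp} are \emph{exponentially} small in $n$ for fixed $s$ and $\alpha$. Any union bound that includes the exceptional event $\{c(f)\gtrsim\log n\}$ therefore leaves a polynomial-in-$n$ term that swamps the target bound (and pushing $t$ high enough to make that event exponentially unlikely forces $c(f)$ to be allowed to be of order $n$, making the $O(c(f))$ edge-change bound vacuous). Likewise, an $O(\log n)$ change in $N$ cannot be absorbed into the ``$+1$'' appearing in the thresholds. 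So the step you flag as ``delicate'' is not delicate but fatal to this route.

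What the paper does instead is prove a \emph{restricted} bijection, Lemma \ref{joyalbijv3}: the map $R$, restricted to $\{f\colon f(S)\subset S^{c}\}$, is a bijection onto the set of doubly rooted trees in which $S$ is an independent set and the second root lies in $S^{c}$. This resolves both of your obstructions at once, with no probability lost. First, the conditioned uniform measure on mappings pushes forward \emph{exactly} to the conditioned uniform measure on such doubly rooted trees (and forgetting the roots, each admissible tree carries the same number $n(n-|S|)$ of root pairs, so the unrooted tree is uniform given $S$ independent); there is no coupling error at all. Second, although the bijection still changes up to $2c(f)$ edges, the edges incident to $S$ that it deletes (one per cycle whose minimum lies in $S$) and the edges incident to $S$ that it adds along the path almost exactly cancel, leaving a discrepancy only at the ends of the path; this yields the deterministic bound $|N_{S}(\widetilde{R}(f))-N_{S}(f)|\leq 1$ of Lemma \ref{joyalbijv3}. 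That ``$\leq 1$'' — and not the gap between $\E N$ and $n(1-\alpha)^{2}e^{-\alpha/(1-\alpha)}$, as you suggest — is the origin of the ``$\pm1$'' in the statement, and it is what allows the mapping-side Chernoff bound to transfer essentially verbatim. To complete your proof you would need to establish precisely this restricted statement: that your bijection maps the conditioned mappings \emph{onto} the conditioned trees, and that the cycle-to-path surgery changes the number of vertices not connected to $S$ by at most one.
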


\subsection{Organization}

\begin{itemize}
\item Theorem \ref{thm1} is stated and proved as Lemma \ref{arratiabij} below.
\item Joyal's original bijection is presented in Lemma \ref{joyalbij}.
\item A restriction of Theorem \ref{thm1} is required to prove the main application, Lemma \ref{mainapp}.  This restriction is demonstrated in Lemma \ref{joyalbijv3}.
\item Section \ref{secconc} lists some concentration inequalities cited elsewhere in this paper.
\item Section \ref{secalg} gives an algorithmic interpretation of Theorem \ref{thm1}.  That is, we specify an algorithm for sampling uniformly random labelled trees on $n$ vertices.
\item The Appendix, Section \ref{secapp}, proves some concentration inequalities for the number of cycles in a random map.  Theorem \ref{thm1} and Lemma \ref{mainapp} are proven without Section \ref{secapp}.
\end{itemize}

\section{Tree/Endofunction Bijections}

\begin{wrapfigure}{r}{0.5\textwidth}
\vspace{-.5cm}
   \def\svgwidth{.5\textwidth}
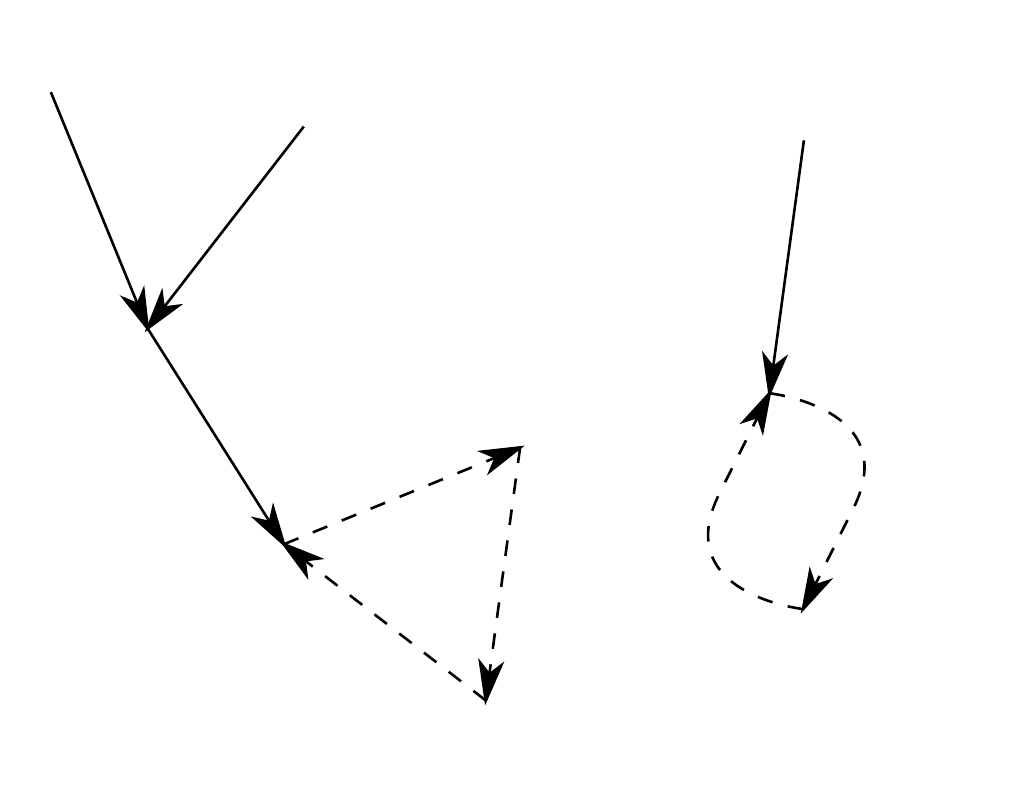
  \caption{Example of the mapping directed graph of $f$.  In this example, $f(1)=3$, $f(2)=7$, $f(3)=8$, $f(4)=6$, $f(5)=2$, $f(6)=1$, $f(7)=2$, and $f(8)=1$.  Also $\cM=\{1,2,3,7,8\}$.}
    \vspace{-1cm}
\end{wrapfigure}

Let $n$ be a positive integer.  We refer to a function $f\colon\{1,\ldots,n\}\to\{1,\ldots,n\}$ as a \textbf{mapping} or \textbf{endofunction}.  A \textbf{tree} on $n$ labelled vertices $\{1,\ldots,n\}$ is a connected, undirected graph with no cycles and no self-loops.  A \textbf{doubly rooted tree} is a tree together with an ordered pair of roots $(r_{1},r_{2})\in\{1,\ldots,n\}^{2}$.

\subsection{Joyal Bijection}

We now describe the concepts from the introduction more precisely.

\begin{definition}[\embolden{Mapping Directed Graph}]\label{digraphdef}
Let $f\colon\{1,\ldots,n\}\to\{1,\ldots,n\}$.  Define the directed edge set
$$E(f)\colonequals\{(x,f(x))\colon x\in\{1,\ldots,n\}\}.\qquad\qquad\qquad\qquad\qquad\qquad\qquad\qquad\qquad\qquad$$
The directed graph $(\{1,\ldots,n\},E(f))$ is called the \textbf{mapping directed graph} of $f$.  This graph has $c(f)$ cycles, where $c(f)$ is the number of cycles of the permutation $f|_{\cM}$ (using Lemma \ref{permlem}).
\end{definition}

When $T$ is a tree, we let $E(T)$ denote the set of (undirected) edges of the tree.

\begin{definition}[\embolden{Core}]\label{coredef}
Let $f\colon\{1,\ldots,n\}\to\{1,\ldots,n\}$.  For any integer $j\geq1$ let $f^{j}$ denote the composition of $f$ with itself $j$ times.  Define the \textbf{core} of the mapping $f$ to be
$$
\cM=\cM(f)
\colonequals\{x\in\{1,\ldots,n\}\colon\,\exists\,j\geq1\,\,\mathrm{such}\,\mathrm{that}\,\, f^{j}(x)=x\}.
$$
\end{definition}

\begin{lemma}\label{permlem}
Let $f\colon\{1,\ldots,n\}\to\{1,\ldots,n\}$.  Then $ f|_{\cM}$ is a permutation on $\cM$.
\end{lemma}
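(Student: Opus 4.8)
The plan is to show that $f|_{\cM}$ is a bijection from $\cM$ to itself, which for a finite set is equivalent to showing it is well-defined into $\cM$ together with injectivity (or surjectivity). First I would verify that $f$ maps $\cM$ into $\cM$: if $x\in\cM$, then there is some $j\geq 1$ with $f^{j}(x)=x$, and I claim $f(x)\in\cM$ as well. Indeed, applying $f$ to both sides of $f^{j}(x)=x$ gives $f^{j}(f(x))=f(f^{j}(x))=f(x)$, so $f(x)$ satisfies the defining property of the core with the same exponent $j$. This shows $f|_{\cM}\colon\cM\to\cM$ is well-defined.

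Next I would establish injectivity of $f$ on $\cM$. Suppose $x,y\in\cM$ with $f(x)=f(y)$. Since $x\in\cM$, pick $j\geq 1$ with $f^{j}(x)=x$; since $y\in\cM$, pick $k\geq 1$ with $f^{k}(y)=y$. The cleanest argument is to note that on $\cM$ each point has a well-defined preimage inside $\cM$: from $f^{j}(x)=x$ we may write $x=f(f^{j-1}(x))$, so $x$ lies in the image of $f|_{\cM}$, and moreover $f^{j-1}(x)\in\cM$ is a preimage of $x$. To get injectivity from $f(x)=f(y)$, apply $f^{j-1}$ to recover $x$: the idea is to use the common period. Concretely, let $m$ be a common multiple of $j$ and $k$ so that $f^{m}(x)=x$ and $f^{m}(y)=y$. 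Then $x=f^{m}(x)=f^{m-1}(f(x))=f^{m-1}(f(y))=f^{m}(y)=y$, which gives $x=y$ and hence injectivity.

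Since $\cM$ is a finite set and $f|_{\cM}$ is an injective map from $\cM$ to itself, it is automatically surjective, and therefore a permutation of $\cM$. I would phrase the final step by invoking the standard fact that an injective self-map of a finite set is a bijection, so no separate surjectivity argument is needed beyond the injectivity established above.

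The main obstacle, such as it is, is handling the differing exponents $j$ and $k$ when proving injectivity; passing to a common multiple $m$ of the two periods resolves this cleanly and is the crux of the argument. Everything else is routine unwinding of the definition of $\cM$ from Definition \ref{coredef}, so I would keep those verifications brief and concentrate the exposition on the well-definedness and the common-period injectivity step.
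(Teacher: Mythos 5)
Your proposal is correct. It follows the same overall route as the paper---directly verifying that $f|_{\cM}$ is injective and surjective---but executes the key steps differently. For injectivity, the paper takes $j$ and $k$ to be the minimal periods of $x$ and $y$ (phrased as gcds of their period sets), assumes $j\leq k$, and forces $j=k$ and then $x=y$ via a minimality argument; your passage to a common multiple $m$ of the two periods, giving $x=f^{m}(x)=f^{m-1}(f(x))=f^{m-1}(f(y))=f^{m}(y)=y$, reaches the same conclusion while sidestepping the gcd/minimality bookkeeping entirely, and is cleaner for it. For surjectivity, the paper exhibits the explicit preimage $f^{j-1}(x)$ of $x$ inside $\cM$ (which needs no finiteness), whereas you invoke the standard fact that an injective self-map of a finite set is bijective; both are valid here since $\cM\subset\{1,\ldots,n\}$ is finite. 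You also explicitly verify the well-definedness point $f(\cM)\subset\cM$, which the paper leaves implicit; that is a small gain in completeness.
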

\begin{proof}
Denote $\ell\colonequals f|_{\cM}$.  We denote $\mathrm{gcd}$ as the greatest common divisor of a set of positive integers.  If $\ell(x)=\ell(y)$ for some $x,y\in\cM$, then let $j\colonequals\mathrm{gcd}\{q\geq1\colon \ell^{q}(x)=x\}$, and let $k\colonequals\mathrm{gcd}\{q\geq1\colon \ell^{q}(y)=y\}$.  Without loss of generality, $j\leq k$.  Then $\ell^{j}(x)=\ell^{j}(y)=x$.  Applying $\ell^{k-j}$ to both sides gives $\ell^{k}(x)=\ell^{k}(y)=y=\ell^{k-j}(x)=\ell^{k-j}(y)$, implying that $k=j$ by minimality of $k$, so that $x=y$.  That is, $\ell$ is injective.  If $x\in\cM$, then $\ell(f^{j-1}(x))=x$, so that $\ell$ is surjective.  (We set $f^{0}(x)\colonequals x$.)
\end{proof}

Since the original Joyal bijection was described in French \cite{joyal81}, we present it below for completeness.  This bijection will then be improved in Lemma \ref{arratiabij} below.  (A partial translation and commentary of \cite{joyal81} is available at \href{http://ozark.hendrix.edu/~yorgey/pub/series-formelles.pdf}{http://ozark.hendrix.edu/$\sim$yorgey/pub/series-formelles.pdf})

\begin{lemma}[\embolden{Joyal Bijection}, {\cite{joyal81}}]\label{joyalbij}
$\exists$ a bijection $J$ from the set of mappings\\ $\{f\colon\{1,\ldots,n\}\to\{1,\ldots,n\}\}$ to the set of doubly rooted trees on $n$ vertices such that
$$\abs{E(f)\,\Delta\, E(J(f))}\leq 2(\abs{\cM(f)}-1).$$
(When we compute this symmetric difference, we remove the directions on the edges $E(f)$, and we count multiple edges from $E(f)$ as distinct.)
\end{lemma}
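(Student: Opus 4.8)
The plan is to construct the Joyal bijection $J$ explicitly by decomposing a mapping into its core permutation and the forest of trees hanging off the core, then using the R\'{e}nyi bijection to convert the core permutation into a linearly ordered sequence of core vertices, which becomes the backbone path of a doubly rooted tree. First I would invoke Lemma \ref{permlem} to assert that $f|_{\cM}$ is a permutation on the core $\cM$, and observe that the mapping directed graph $G(f)$ decomposes into this permutation on $\cM$ together with a collection of rooted trees, each attached to a unique core vertex (every non-core vertex $x$ eventually maps into $\cM$ under iteration, so it lies in a tree rooted at a core vertex, with edges $(x,f(x))$ pointing toward the core).

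Next I would apply the R\'{e}nyi bijection $\pi$ to the permutation $f|_{\cM}$: writing each cycle starting with its smallest element and arranging the cycles in decreasing order of their smallest elements produces a linear ordering $v_{1},v_{2},\ldots,v_{m}$ of the $m\colonequals\abs{\cM}$ core vertices. The image tree $J(f)$ is then formed by taking the path $v_{1}-v_{2}-\cdots-v_{m}$ through the core vertices (with undirected edges), and re-attaching to each $v_{i}$ the same rooted tree that was attached to it in $G(f)$. The two roots are designated as $r_{1}\colonequals v_{1}$ and $r_{2}\colonequals v_{m}$, the endpoints of this backbone path. I would then argue bijectivity by describing the inverse: given a doubly rooted tree $(T,r_{1},r_{2})$, the unique path from $r_{1}$ to $r_{2}$ recovers the ordered core sequence, inverting $\pi$ recovers the core permutation, and the trees hanging off the path recover the non-core structure; standard arguments show these two maps are mutually inverse since $\pi$ is a bijection between linear orders and permutations.

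Finally, for the edge-difference bound I would compare $E(f)$ and $E(J(f))$ directly. All edges internal to the hanging trees are common to both graphs (after forgetting directions), so the symmetric difference is supported entirely on the core. In $G(f)$ the core contributes $m$ directed edges (one per core vertex, forming the cycles), while in $J(f)$ the backbone path contributes $m-1$ undirected edges; hence $\abs{E(f)\,\Delta\,E(J(f))}\leq m+(m-1)\leq 2(m-1)+1$, and a more careful count exploiting that the smallest element of the last cycle becomes an endpoint with no outgoing backbone edge yields exactly $\abs{E(f)\,\Delta\,E(J(f))}\leq 2(\abs{\cM(f)}-1)$.

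I expect the main obstacle to be the edge-counting step, specifically verifying that the R\'{e}nyi ordering causes enough core edges to coincide between $G(f)$ and $J(f)$ so that the bound is $2(\abs{\cM}-1)$ rather than a cruder $2\abs{\cM}$. The point is that when a cycle of $f|_{\cM}$ is traversed starting from its smallest element and these starting elements are concatenated in the R\'{e}nyi order, certain directed core edges of $G(f)$ survive as backbone edges of $J(f)$; the $-1$ improvement must come from tracking which edges are genuinely destroyed versus merely reoriented, and the careful bookkeeping of the boundary vertices between consecutive cycles is where the argument is most delicate.
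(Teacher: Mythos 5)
Your construction is correct in outline, but it is not the bijection the paper uses to prove Lemma \ref{joyalbij}: it is the R\'{e}nyi--Joyal bijection that the paper develops separately as Lemma \ref{arratiabij} (Theorem \ref{thm1}). The paper's Joyal bijection never looks at the cycle structure of $f|_{\cM}$ at all: it sorts the core as $s_{1}<\cdots<s_{m}$ and takes the backbone path to be $f(s_{1}),f(s_{2}),\ldots,f(s_{m})$, with roots $(f(s_{1}),f(s_{m}))$; that is, it converts the core permutation into a linear order via its two-line notation. You instead write each cycle with its smallest element first and concatenate cycles in decreasing order of their leaders, which is exactly the R\'{e}nyi ordering the paper reserves for the improved bijection. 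Your route buys more than the lemma asks for: with your ordering, every within-cycle edge of $G(f)$ survives as a backbone edge, so the symmetric difference consists of exactly one deleted closing edge per cycle plus the $c(f)-1$ connecting edges between consecutive cycles, i.e., it has size $2c(f)-1$. Since $c(f)\leq\abs{\cM(f)}-1$ whenever some cycle has length at least $2$, this is strictly smaller than $2(\abs{\cM(f)}-1)$, and it is the logarithmic-size bound the paper's application actually needs. By contrast, the paper's ordering can destroy essentially every core edge, and the crude count for it is $m+(m-1)=2\abs{\cM(f)}-1$. Note that your stated mechanism for the $-1$ improvement (the last cycle's leader having ``no outgoing backbone edge'') is not the real one; the improvement comes entirely from the reuse of within-cycle edges, and you should replace that hand-wave by the exact count just given.

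Two caveats. First, in the degenerate case where every core element is a fixed point (e.g., $f=\mathrm{id}$, so $c(f)=\abs{\cM(f)}=m$), no core edge can be reused---$E(f)$ contributes $m$ self-loops---so your count is $2m-1>2(m-1)$, and the stated bound holds only under the convention that self-loops disappear when directions are removed. This edge case equally afflicts the paper's own construction (which is silent on the convention), so it is not a defect of your approach specifically, but your ``careful bookkeeping'' step should address it. Second, in your surjectivity argument, make explicit that the inverse of the R\'{e}nyi map recovers the cycles by cutting the backbone path at its left-to-right minima; that is the precise content behind ``standard arguments show these two maps are mutually inverse.''
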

\begin{proof}
Let $f\colon\{1,\ldots,n\}\to\{1,\ldots,n\}$.  Let $\cM$ be the core of $f$, as in Definition \ref{coredef}.  Let $m\colonequals\abs{\cM}$.  Denote $\cM\equalscolon\{s_{1},s_{2},\ldots,s_{m}\}$ such that $s_{1}<s_{2}<\cdots<s_{m}$.  Consider the undirected graph on the vertices $V\colonequals\{1,\ldots,n\}$ with edge set
\begin{equation}\label{edgeeqa}
E\colonequals\{\{x,f(x)\}\colon x\in\cM^{c}\}.
\end{equation}
(By definition of $\cM^{c}$ in Definition \ref{coredef}, these edges are all distinct.)  Since $\cM^{c}=\{x\in\{1,\ldots,n\}\colon\forall\,j\geq1\,\, f^{j}(x)\neq x\}$, $\forall$ $x\in\cM^{c}$, $\exists$ $y\in\cM,j\geq1$ such that $f^{j}(x)=y$.  For any $x\in\cM^{c}$, let $j(x)$ denote the smallest positive integer $j$ such that there exists $y\in\cM$ with $f^{j}(x)=y$.  For any $y\in\cM$, let $T_{y}\colonequals\{x\in\cM^{c}\colon f^{j(x)}(x)=y\}$.  Then $\cM^{c}$ is a disjoint union $\cup_{y\in\cM}T_{y}$.  For any $y\in\cM$, the edge set $\{\{x,f(x)\}\colon x\in T_{y}\}$ forms a (possibly empty) tree.  That is, $\cup_{y\in\cM}T_{y}$ is a disjoint union of $m$ trees.
Consider now the edge set
\begin{equation}\label{edgeeq}
E'\colonequals E\,\cup\bigcup_{i=1}^{m-1}\{f(s_{i}),f(s_{i+1})\}.
\end{equation}
(If $m=1$, let $E'\colonequals E$.)  (Recall that $E$ defined in \eqref{edgeeqa} are disjoint edges, and by definition of $\cM$ and Lemma \ref{permlem}, all edges in \eqref{edgeeq} are distinct.)  The graph $T=(V,E')$ is then a (connected) tree with $n-1$ edges, and $n$ vertices.  More specifically, $T$ is $m$ trees $\cup_{y\in\cM}T_{y}$ connected along a path $\{f(s_{1}),\ldots,f(s_{m})\}$ of length $m-1$. We define the \textbf{Joyal bijection} $J\colon\{\mathrm{mappings}\}\to\{\mathrm{doubly}\,\,\mathrm{rooted}\,\,\mathrm{trees}\}$ by
$$J(f)\colonequals (T,f(s_{1}),f(s_{m})).$$
It remains to show that $J$ is in fact a bijection.

\textbf{Proof of injectivity of $J$}.  Let $f,g\colon\{1,\ldots,n\}\to\{1,\ldots,n\}$.  Let $\cM,\cN$ be the cores of $f$ and $g$, respectively. Suppose $J(f)=J(g)$.  By definition of $J$ (i.e \eqref{edgeeqa}), we have $\cM=\cN$, $\cM^{c}=\cN^{c}$, and $f|_{\cM^{c}}=g|_{\cN^{c}}$.  Also by definition of $J$ (i.e. \eqref{edgeeq}), $f|_{\cM}=g|_{\cM}$.  That is, $f=g$.

\textbf{Proof of surjectivity of $J$}.  Let $(T,r_{1},r_{2})$ be a doubly rooted tree.  Form the unique path $p$ of vertices in $T$ starting at $r_{1}$ and ending at $r_{2}$.  Let $T'\colonequals T$.  Repeat the following procedure until $T'\setminus p=\emptyset$:
\begin{itemize}
\item Choose one $x\in T'\setminus p$ of degree $1$, and define $f(x)$ to be the label of the unique vertex connected to $x$.
\item Re-define $T'$ by removing from $T'$ the vertex $x$ and the edge emanating from $x$.
\end{itemize}
In this way, $f(x)$ is defined for all $x\in T\setminus p$.  Now, we define $f$ on $p$.  Label the elements of $p$ in the order they appear in the path as $r_{1}=x_{1},x_{2},x_{3},\ldots,x_{m}=r_{2}$.  Let $\ell\colon\{x_{1},\ldots,x_{m}\}\to\{x_{1},\ldots,x_{m}\}$ be the permutation defined so that $x_{\ell(1)}<x_{\ell(2)}<\cdots<x_{\ell(m)}$.  Then define $f$ so that
$$f(x_{\ell(i)})\colonequals x_{i},\qquad\forall\,1\leq i\leq m.$$
Then $J(f)=(T,r_{1},r_{2})$ by \eqref{edgeeq}, so that $J$ is surjective.

Finally, comparing Definition \ref{digraphdef} with \eqref{edgeeq} proves the desired inequality.
\end{proof}

\begin{figure}[h!]\label{figone}
\centering
\def\svgwidth{.9\textwidth}
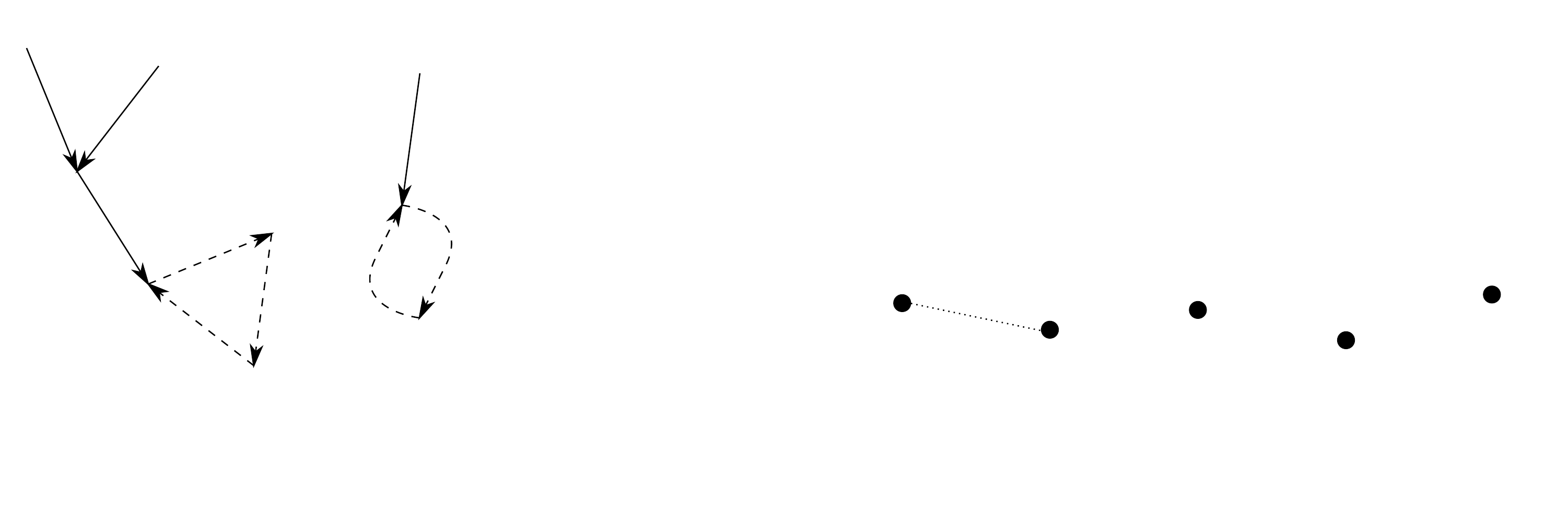
  \caption{Example of the Joyal bijection.  In this example, $f(1)=3$, $f(2)=7$, $f(3)=8$, $f(7)=2$, and $f(8)=1$.  So, the chosen order of the core $\cM=\{1,2,3,7,8\}$ in $J(f)$ is $3,7,8,2,1$.  Also, $r_{1}=f(1)=3$, $r_{2}=f(8)=1$.}
\end{figure}

\begin{example}
Let $f\colon\{1,\ldots,n\}\to\{1,\ldots,n\}$ so that $f(i)=i$ for all $1\leq i\leq n$.  Then $\cM=\{1,\ldots,n\}$, and $J(f)$ is the path that respects the ordering on $\{1,2,3,\ldots,n\}$.  The roots are $r_{1}=\{1\}$ and $r_{2}=\{n\}$.
\end{example}

\begin{example}
Let $f\colon\{1,\ldots,n\}\to\{1,\ldots,n\}$ so that $f(i)=1$ for all $1\leq i\leq n$.  Then $\cM=\{1\}$, and $J(f)$ is a star graph with a single vertex of degree $n-1$.  The roots are $r_{1}=r_{2}=\{1\}$.
\end{example}

\subsection{R\'{e}nyi-Joyal Bijection}

For the Joyal bijection of Lemma \ref{joyalbij}, $\abs{E(f)\,\Delta\, E(J(f))}$ is approximately $\sqrt{n}$ with high probability (with respect to a uniformly random choice of $f\colon\{1,\ldots,n\}\to\{1,\ldots,n\}$), since the core of a random mapping is of size approximately $\sqrt{n}$ with high probability \cite{harris60}.  Our ultimate goal is to prove a concentration inequality for a random mapping $f$, and then transfer it to a concentration inequality for a random tree.  So, it is most desirable to have a bijection $R$ such that $\abs{E(f)\,\Delta\, E(R(f))}$ as small as possible.  Using R\'{e}nyi's bijection within Joyal's bijection, it is possible to design a bijection $R$ satisfying $\abs{E(f)\,\Delta\, E(R(f))}\leq 3\log n$ with high probability, as we describe in Lemma \ref{arratiabij} below.  The idea is: the number of cycles of a random mapping is of size about $\log n$, so removing one edge from each cycle of the random mapping only changes about $\log n$ edges.  In contrast, the Joyal bijection could change essentially all edges in the core, resulting in about $\sqrt{n}$ edge changes.

\begin{lemma}[\embolden{R\'{e}nyi-Joyal Bijection}]\label{arratiabij}
There exists a bijection $R$ from the set
$$\{f\colon\{1,\ldots,n\}\to\{1,\ldots,n\}\}$$
to the set of doubly rooted trees on $n$ vertices such that for all maps $f$,
$$\abs{E(f)\,\Delta\, E(R(f))}= 2(c(f)-1).$$
(When we compute this symmetric difference, we remove the directions on the edges $E(f)$, and we count multiple edges from $E(f)$ as distinct.)
\end{lemma}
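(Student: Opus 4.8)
The plan is to reuse the construction of the Joyal bijection from Lemma \ref{joyalbij} verbatim on the off-core vertices, changing only the rule that linearly orders the core $\cM$. Ordering $\cM$ by magnitude, as in Lemma \ref{joyalbij}, ignores the cyclic structure of $\sigma\colonequals f|_{\cM}$ and forces almost every core edge to change; instead I would order $\cM$ by R\'{e}nyi's fundamental correspondence applied to $\sigma$. Concretely: write each cycle of $\sigma$ starting from its smallest element, list the cycles in decreasing order of these smallest elements, and concatenate to obtain a word $w=w_{1}w_{2}\cdots w_{m}$ on $\cM$, where $m\colonequals\abs{\cM}$. I then declare the spine of the tree to be the path $w_{1}-w_{2}-\cdots-w_{m}$, attach to it the very same rooted forest on $\cM^{c}$ produced in Lemma \ref{joyalbij}, and take the two roots to be $w_{1}$ and $w_{m}$. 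This defines $R$.

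To prove that $R$ is a bijection I would exhibit its inverse, mirroring the surjectivity argument for $J$. Given a doubly rooted tree $(T,r_{1},r_{2})$, form the path $p$ from $r_{1}$ to $r_{2}$; this will be the core, and the off-path structure recovers $f|_{\cM^{c}}$ exactly as in Lemma \ref{joyalbij}, by repeatedly orienting a degree-one vertex toward $p$. The only new ingredient is the decoding of $f|_{\cM}$ from the word $w=r_{1}=w_{1},\ldots,w_{m}=r_{2}$ read off $p$. The key observation is that, because each cycle was written smallest-element-first and the cycles were listed in decreasing order of their smallest elements, the positions at which a new cycle begins are exactly the left-to-right minima of $w$: each block leader is its block's minimum, hence strictly smaller than every earlier element, while no later element of a block can undercut the block leader. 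Thus cutting $w$ immediately before each left-to-right minimum reconstructs the cycles of $\sigma$ uniquely, recovering $f$ on $\cM$. Injectivity follows because $R(f)$ determines both $f|_{\cM^{c}}$ and $f|_{\cM}$; surjectivity follows because the cut-at-minima decoding inverts the concatenation step for every word, i.e.\ for every path $p$.

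Finally I would count $\abs{E(f)\,\Delta\,E(R(f))}$. Every edge incident to a vertex of $\cM^{c}$ is untouched by the construction, so it lies in both $E(f)$ and $E(R(f))$ and contributes nothing; hence only core edges matter. Within a single cycle $(c_{0},c_{1},\ldots,c_{\ell-1})$ written smallest-first, every directed step $(c_{i},c_{i+1})$ with $i<\ell-1$ survives as a consecutive pair of $w$, so these $\sum_{\mathrm{cycles}}(\ell-1)=m-c(f)$ edges are common to $E(f)$ and $E(R(f))$; the core edges of $E(f)$ that are destroyed are the closing edges $(c_{\ell-1},c_{0})$, one per cycle, and the edges of $E(R(f))$ that are new are the links of $w$ joining the end of one cycle-block to the start of the next, of which there are $c(f)-1$. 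Tallying the deletions against the insertions, with multiplicities and with the conventions for the self-loop of a fixed point and the repeated edge of a transposition, yields the stated value $2(c(f)-1)$. This last accounting is the step I expect to demand the most care: the handling of self-loops (length-one cycles) and of the doubled undirected edge produced by a $2$-cycle is exactly what governs whether a closing edge cancels against a retained or an inserted edge, and so it is precisely this bookkeeping that pins down the constant.
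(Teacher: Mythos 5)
Your construction and bijectivity argument coincide with the paper's: the same R\'{e}nyi word (cycles written smallest-element-first, concatenated in decreasing order of their minima) forms the spine, the off-core forest is untouched, and the inverse is recovered by cutting the path before each left-to-right minimum. In fact your decoding is the more careful one: the paper formalizes the cutting rule via maximal increasing runs, which misparses a word such as $1,5,3$ arising from the single cycle $(1\,5\,3)$, whereas cutting at left-to-right minima handles every word correctly. So up through bijectivity your proposal is sound and essentially identical to the paper's.

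The gap is exactly the step you deferred. Your own tally --- one closing edge deleted per cycle (the self-loop of a fixed point, the second copy of the doubled edge of a transposition, or the genuine chord of a longer cycle, each counting $1$ under the stated convention that directions are dropped and multiplicities kept), against $c(f)-1$ inserted linking edges --- sums to $c(f)+(c(f)-1)=2c(f)-1$, not $2(c(f)-1)$, and no bookkeeping of loops or doubled edges changes this: every cycle loses exactly one edge regardless of its length. The paper's own Figure \ref{figtwo} example confirms it: with cycles $(2\,7)$ and $(1\,3\,8)$, one copy of $\{2,7\}$ and the edge $\{1,8\}$ are deleted while $\{7,1\}$ is inserted, giving symmetric difference $3=2c(f)-1$, not $2$. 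So the equality you are asked to prove is off by one under the stated conventions; what your argument actually establishes is $\abs{E(f)\,\Delta\,E(R(f))}=2c(f)-1$, which yields the bound $\leq 2c(f)$ of Theorem \ref{thm1} but not the displayed equality. The paper's proof is silent at this very point (it ends by ``comparing Definition \ref{digraphdef} with \eqref{edgeeqr}''), so you have located a genuine discrepancy in the lemma's statement rather than a fixable hole in your construction; but as a proof of the stated equality, the proposal does not close, and asserting that the conventions ``pin down the constant'' to $2(c(f)-1)$ is precisely the claim that fails.
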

\begin{proof}
Let $f\colon\{1,\ldots,n\}\to\{1,\ldots,n\}$.  Let $\cM$ be the core of $f$, as in Definition \ref{coredef}.  Let $m\colonequals\abs{\cM}$.
Consider the undirected graph on the vertices $V\colonequals\{1,\ldots,n\}$ with edge set
\begin{equation}\label{edgeeqar}
E\colonequals\{\{x,f(x)\}\colon x\in\cM^{c}\}.
\end{equation}
(By definition of $\cM^{c}$ in Definition \ref{coredef}, these edges are all distinct.)  Since $\cM^{c}=\{x\in\{1,\ldots,n\}\colon\forall\,j\geq1\,\, f^{j}(x)\neq x\}$, $\forall$ $x\in\cM^{c}$, $\exists$ $y\in\cM,j\geq1$ such that $f^{j}(x)=y$.  For any $x\in\cM^{c}$, let $j(x)$ denote the smallest positive integer $j$ such that there exists $y\in\cM$ with $f^{j}(x)=y$.  For any $y\in\cM$, let $T_{y}\colonequals\{x\in\cM^{c}\colon f^{j(x)}(x)=y\}$.  Then $\cM^{c}$ is a disjoint union $\cup_{y\in\cM}T_{y}$.  For any $y\in\cM$, the edge set $\{\{x,f(x)\}\colon x\in T_{y}\}$ forms a tree.  That is, $\cup_{y\in\cM}T_{y}$ is a disjoint union of $m$ trees.

From Lemma \ref{permlem}, recall that $f|_{\cM}$ is a permutation on $\cM$.  We can then write $\cM=\cup_{i=1}^{c(f)}\cM_{i}$, where $\cM_{1},\ldots,\cM_{c(f)}$ are subsets of vertices corresponding to the disjoint cycles of $f|_{\cM}$.  For each $1\leq i\leq c(f)$, denote $\cM_{i}=\{m_{i1},\ldots,m_{ik(i)}\}$, where $k(i)\colonequals\abs{\cM_{i}}$, $m_{i1}$ is the smallest element of $\cM_{i}$, and $m_{i(j+1)}=f(m_{ij})$ for all $1\leq j<k(i)$.  [That is, we can write $\cM_{i}$ in cycle notation as $(m_{i1}\cdots m_{ik(i)})$, for all $1\leq i\leq c(f)$.]  We also choose the ordering on $\cM_{1},\ldots,\cM_{c(f)}$ such that $m_{i1}>m_{(i+1)1}$ for all $1\leq i\leq c(f)-1$.  [That is, we order the cycles in the reverse order of their smallest elements.]  Consider now the edge set
\begin{equation}\label{edgeeqr}
E'\colonequals E\,\cup\Big(\bigcup_{i=1}^{c(f)}\,\bigcup_{j=1}^{k(i)-1}\{m_{ij},m_{i(j+1)}\}\Big)\cup\bigcup_{i=1}^{c(f)-1}\{m_{ik(i)},m_{(i+1)1}\}.
\end{equation}
(If $m=1$, let $E'\colonequals E$.)  (Recall that $E$ defined in \eqref{edgeeqar} are disjoint edges, and by definition of $\cM$ and Lemma \ref{permlem}, all edges in \eqref{edgeeqar} are distinct.)  In words, we write each $\cM_{i}$ in cycle notation with the lowest number in each cycle appearing first, we remove the edge connecting the first and last endpoints of the cycle, and we connect the last part of the $i^{th}$ cycle to the first part of the $(i+1)^{st}$ cycle, for all $1\leq i\leq c(f)-1$.  The graph $T=(V,E')$ is then a (connected) tree with $n-1$ edges, and $n$ vertices.  More specifically, it is $m$ trees connected along a path of length $m-1$.  The first element in the path is $m_{11}$ and the last element in the path is $m_{c(f)k(c(f))}$.  We define the \textbf{R\'{e}nyi-Joyal bijection} $R\colon\{\mathrm{mappings}\}\to\{\mathrm{doubly}\,\,\mathrm{rooted}\,\,\mathrm{trees}\}$ by
$$R(f)\colonequals (T,m_{11},m_{c(f)k(c(f))}).$$  
It remains to show that $R$ is one-to-one.

\begin{figure}[h!]\label{figtwo}
\centering
\def\svgwidth{.9\textwidth}
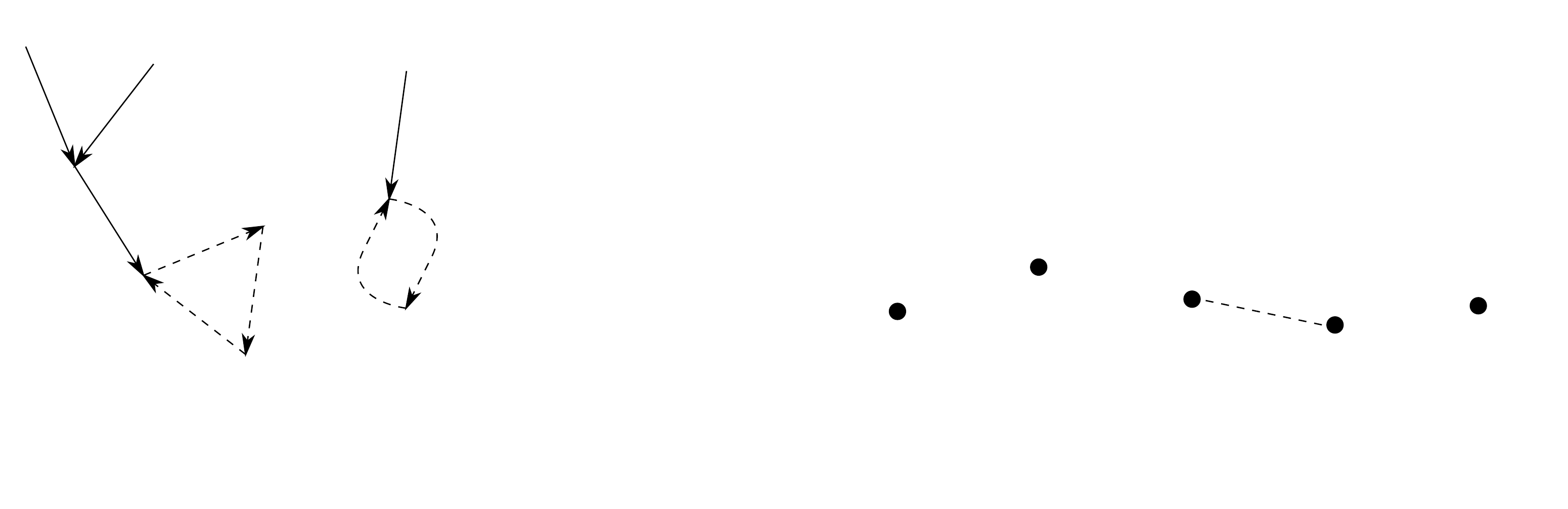
  \caption{Example of the R\'{e}nyi-Joyal bijection $R$.  In this example, $f(1)=3$, $f(3)=8$, and so on.  Also, $\cM=\{1,2,3,7,8\}$, $\cM_{1}=\{2,7\}$, $\cM_{2}=\{1,3,8\}$.}
\end{figure}

\textbf{Proof of injectivity}.  Let $f,g\colon \colon\{1,\ldots,n\}\to\{1,\ldots,n\}$.  Let $\cM,\cN$ be the cores of $f$ and $g$, respectively.  Suppose $R(f)=R(g)$.  By definition of $R$ (i.e \eqref{edgeeqar}), we have $\cM=\cN$, $\cM^{c}=\cN^{c}$.  Also, $f|_{\cM^{c}}=g|_{\cM^{c}}$.  It remains to show that $f|_{\cM}=g|_{\cM}$.  Since $R(f)=R(g)$, both $f$ and $g$ have the same ordered path of their cores in $R(f),R(g)$ respectively.  (In Figure \ref{figtwo}, this ordering would be $(2,7,1,3,8)$.)  We can then e.g. recover the action of $f$ on the core (i.e. the permutation $f|_{\cM}$) by creating cycles at the smallest elements of this ordering, read from left to right.  So, if the ordered path of the core is $(s_{1},\ldots,s_{m})$, let $k(1)\geq1$ be the largest integer $k$ so that $s_{1}<s_{2}<\cdots<s_{k}$, and inductively define $k(i+1)$ to be the largest integer $k\leq m$ such that $s_{k(i)+1}<s_{k(i)+2}<\cdots<s_{k}$.  It then follows by definition of $R$ that $f|_{\cM}$ is a permutation in the cycle notation
$$(s_{1}\cdots s_{k(1)})(s_{k(1)+1}\cdots s_{k(2)})\cdots (s_{k(c(f)-1)+1}\cdots s_{k(c(f))}).$$
Since $R(f)=R(g)$, $g|_{\cM}$ is also a permutation with this same cycle notation.  That is, $f|_{\cM}=g|_{\cM}$.  In conclusion, $f=g$.

\textbf{Proof of surjectivity of $R$}.  Let $(T,r_{1},r_{2})$ be a doubly rooted tree.  Form the unique path $p$ of vertices in $T$ starting at $r_{1}$ and ending at $r_{2}$.  Let $T'\colonequals T$.  Repeat the following procedure until $T'\setminus p=\emptyset$:
\begin{itemize}
\item Choose one $x\in T'$ of degree $1$, and define $f(x)$ to be the label of the vertex connected to $x$.
\item Re-define $T'$ by removing from $T'$ the vertex $x$ and the edge emanating from $x$.
\end{itemize}
In this way, $f(x)$ is defined for all $x\in T\setminus p$.  Now, we define $f$ on $p$.  Label the elements of $p$ in the order they appear in the path as $r_{1}=s_{1},s_{2},s_{3},\ldots,s_{m}=r_{2}$.  Let $\cM\colonequals\{s_{1},\ldots,s_{m}\}$.  Let $k(1)\geq1$ be the largest integer $k$ so that $s_{1}<s_{2}<\cdots<s_{k}$, and inductively define $k(i+1)$ to be the largest integer $k\leq m$ such that $s_{k(i)+1}<s_{k(i)+2}<\cdots<s_{k}$.  Define $f|_{\cM}$ to be the following permutation (written in cycle notation)
$$(s_{1}\cdots s_{k(1)})(s_{k(1)+1}\cdots s_{k(2)})\cdots (s_{k(c(f)-1)+1}\cdots s_{k(c(f))}).$$
Then $R(f)=(T,r_{1},r_{2})$, so that $J$ is surjective.

Finally, comparing Definition \ref{digraphdef} with \eqref{edgeeqr} proves the desired edge inequality.
\end{proof}

By removing the roots of the tree from the definition of $R$ in Lemma \ref{arratiabij}, we arrive at the following.

\begin{cor}[\embolden{R\'{e}nyi-Joyal Bijection with Roots Removed}]
There exists an $n^{2}$-to-one function $R$ from the set
$$\{f\colon\{1,\ldots,n\}\to\{1,\ldots,n\}\}$$
to the set of trees on $n$ vertices such that for all maps $f$,
$$\abs{E(f)\,\Delta\, E(R(f))}\leq 2c(f).$$
(When we compute this symmetric difference, we remove the directions on the edges $E(f)$, and we count multiple edges from $E(f)$ as distinct.)
\end{cor}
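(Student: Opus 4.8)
The plan is to obtain the desired function by simply forgetting the roots in the bijection $R$ of Lemma \ref{arratiabij}. Concretely, I would let $R$ denote the R\'{e}nyi--Joyal bijection from Lemma \ref{arratiabij}, which maps each $f\colon\{1,\ldots,n\}\to\{1,\ldots,n\}$ to a doubly rooted tree on $n$ vertices, and recall the forgetful map $\rho(T,r_{1},r_{2})\colonequals T$ introduced in Section \ref{secintro}. The claimed function is then the composition $\rho\circ R$, and I would verify its two asserted properties in turn.

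First I would establish that $\rho\circ R$ is $n^{2}$-to-one. Since $R$ is a bijection onto the set of doubly rooted trees on $n$ vertices, it suffices to check that $\rho$ is $n^{2}$-to-one as a map from doubly rooted trees to un-rooted trees. But for a fixed un-rooted tree $T$ on the vertex set $\{1,\ldots,n\}$, the preimage $\rho^{-1}(T)$ consists of exactly the doubly rooted trees $(T,r_{1},r_{2})$ obtained by choosing the ordered pair $(r_{1},r_{2})\in\{1,\ldots,n\}^{2}$; these are pairwise distinct as doubly rooted trees and they exhaust $\rho^{-1}(T)$, so $\abs{\rho^{-1}(T)}=n^{2}$. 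As $R$ is a bijection, the fiber of $\rho\circ R$ over $T$ equals $R^{-1}(\rho^{-1}(T))$ and therefore also has exactly $n^{2}$ elements. (As a consistency check, this matches the count $n^{n}=n^{2}\cdot n^{n-2}$ of mappings against Cayley's formula $n^{n-2}$ for the number of un-rooted trees on $n$ vertices.)

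It then remains to verify the edge inequality. Passing from a doubly rooted tree $(T,r_{1},r_{2})$ to the un-rooted tree $T=\rho(T,r_{1},r_{2})$ leaves the edge set unchanged, so $E(\rho(R(f)))=E(R(f))$ for every mapping $f$. Applying Lemma \ref{arratiabij} then gives
$$\abs{E(f)\,\Delta\, E(\rho(R(f)))}=\abs{E(f)\,\Delta\, E(R(f))}=2(c(f)-1)\leq 2c(f),$$
which is the desired bound.

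The argument is entirely routine once Lemma \ref{arratiabij} is in hand, so there is no substantial obstacle; the only step requiring any care is confirming that every fiber of $\rho$ has the same cardinality $n^{2}$, independent of $T$, so that $\rho\circ R$ is $n^{2}$-to-one in the strong sense (every fiber, not merely on average). This is immediate because the choice of the ordered root pair $(r_{1},r_{2})$ is unconstrained by the structure of $T$.
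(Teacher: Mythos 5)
Your proposal is correct and is exactly the argument the paper intends: the corollary is stated immediately after the remark that one ``removes the roots'' from the bijection $R$ of Lemma \ref{arratiabij}, which is precisely your composition $\rho\circ R$, with the $n^{2}$-to-one property coming from the $n^{2}$ choices of ordered root pair and the edge bound from $2(c(f)-1)\leq 2c(f)$. No gaps; your write-up simply makes explicit what the paper leaves as an immediate observation.
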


\subsection{R\'{e}nyi-Joyal Bijection, Restricted}

For the main application, Lemma \ref{mainapp}, we also need to restrict the bijection in Lemma \ref{arratiabij} to a specific class of mappings.

\begin{lemma}[\embolden{R\'{e}nyi-Joyal Bijection, Restricted}]\label{joyalbijv3}
Let $1\leq k<n$.  Denote $S\colonequals\{1,\ldots,k\}$ and $S^{c}\colonequals\{k+1,\ldots,n\}$.  There exists a bijection $\widetilde{R}$ from the set
\begin{equation}\label{fsrest}
\{f\colon\{1,\ldots,n\}\to\{1,\ldots,n\},\, f(S)\subset S^{c}\}
\end{equation}
to the set
\begin{equation}\label{dbtreeset}
\begin{aligned}
&\{\mathrm{doubly}\,\,\mathrm{rooted}\,\,\mathrm{trees}\,\,\mathrm{on}\,\,n\,\,\mathrm{vertices}\,\,\mathrm{such}\,\,\mathrm{that}\,\,
S\,\,\mathrm{is}\,\,\mathrm{an}\,\,\mathrm{independent}\,\,\mathrm{set}\,\,\mathrm{in}\,\,\mathrm{the}\,\,\mathrm{tree}\\
&\,\,\mathrm{and}\,\,\mathrm{the}\,\,\mathrm{second}
\,\,\mathrm{root}\,\,\mathrm{is}\,\,\mathrm{in}\,\, S^{c}\},
\end{aligned}
\end{equation}
such that for all maps $f$ with $f(S)\subset S^{c}$,
$$\absf{E(f)\,\Delta\, E(\widetilde{R}(f))}\leq 2c(f).$$
(When we compute this symmetric difference, we remove the directions on the edges $E(f)$, and we count multiple edges from $E(f)$ as distinct.)

Moreover, if $N_{S}$ denotes the number of vertices in the graph that are not connected to $S$, we have
\begin{equation}\label{nseq}
\absf{N_{S}(\widetilde{R}(f))- N_{S}(f)}\leq1.
\end{equation}
\end{lemma}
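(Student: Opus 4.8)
The plan is to take $\widetilde R$ to be simply the restriction of the R\'enyi--Joyal bijection $R$ of Lemma \ref{arratiabij} to the domain \eqref{fsrest}, and to verify that this restriction is a bijection onto \eqref{dbtreeset} carrying the two stated properties. I keep the notation from the proof of Lemma \ref{arratiabij}: the core $\cM$, its cycles $\cM_1,\ldots,\cM_{c(f)}$ written as $(m_{i1}\cdots m_{ik(i)})$ with $m_{i1}=\min\cM_i$ and $f(m_{ij})=m_{i(j+1)}$, ordered so that $m_{11}>m_{21}>\cdots>m_{c(f)1}$. The one structural fact driving everything is that \emph{the last element of every cycle lies in $S^c$}: if $m_{ik(i)}\in S$, then $m_{i1}=\min\cM_i\leq m_{ik(i)}\leq k$ forces $m_{i1}\in S$, while $f(m_{ik(i)})=m_{i1}$ and $f(S)\subset S^c$ force $m_{i1}\in S^c$, a contradiction.

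First I would check that $\widetilde R$ maps \eqref{fsrest} into \eqref{dbtreeset}. Every edge of $\widetilde R(f)$ is either of the form $\{x,f(x)\}$ (a hanging or a within-cycle edge) or a connecting edge $\{m_{ik(i)},m_{(i+1)1}\}$. In the first case, if an endpoint $x$ lies in $S$ then $f(x)\in S^c$; in the second case $m_{ik(i)}\in S^c$ by the fact above. So no edge joins two vertices of $S$, i.e.\ $S$ is independent, and the second root $m_{c(f)k(c(f))}$, being a cycle-end, lies in $S^c$.

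Next I would establish surjectivity by showing the inverse of $R$ sends a tree in \eqref{dbtreeset} back into \eqref{fsrest}. On hanging edges this is immediate from independence. On the core path $p=(s_1,\ldots,s_m)$, $R^{-1}$ rebuilds $f|_{\cM}$ by cutting $p$ into maximal ascending runs $(s_a<\cdots<s_b)$ and declaring each to be a cycle, so $f(s_b)=s_a$. The only way to produce $f(s)\in S$ with $s\in S$ is for some run to have its \emph{largest} element in $S$; such a run consists entirely of vertices $\leq k$, so a run of length $\geq 2$ would place two path-adjacent vertices in $S$ (contradicting independence), while a run of length $1$ at an $S$-vertex $s_j$ forces the smaller neighbour $s_{j+1}<s_j\leq k$ (or $s_{j-1}$, using $s_m=r_2\in S^c$) into $S$ and adjacent to it (again impossible). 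Hence $f(S)\subset S^c$. Injectivity and the bound $\absf{E(f)\,\Delta\,E(\widetilde R(f))}=\absf{E(f)\,\Delta\,E(R(f))}\leq 2c(f)$ are inherited from Lemma \ref{arratiabij}.

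The main obstacle is \eqref{nseq}. Since $\widetilde R(f)$ and $G(f)$ share every hanging and every within-cycle edge and differ only by deleting the wrap edges $\{m_{ik(i)},m_{i1}\}$ and inserting the connecting edges $\{m_{ik(i)},m_{(i+1)1}\}$, I would first argue that a vertex can change whether it is adjacent to $S$ only if it is a cycle-end $v_i:=m_{ik(i)}$: every other core vertex keeps both neighbours, except a cycle-start $m_{i1}$, whose altered neighbour merely switches between $m_{ik(i)}$ and $m_{(i-1)k(i-1)}$, both in $S^c$, hence irrelevant to adjacency to $S$. (Self-loops at fixed points, which lie in $S^c$, and the doubled edge of a $2$-cycle never create an adjacency to $S$, so they are harmless; and the $S$-vertices themselves, never adjacent to $S$ in either graph, cancel in the difference.) Writing $[\,\cdot\,]$ for the indicator of an event and $D_i$ for the contribution of $v_i$'s unchanged neighbours and hanging edges, $v_i$ is adjacent to $S$ in $G(f)$ iff $D_i\vee[m_{i1}\in S]$ and in $\widetilde R(f)$ iff $D_i\vee[m_{(i+1)1}\in S]$, with no successor term for $i=c(f)$. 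Setting $b_i:=[m_{i1}\in S]$, the net change in the number of vertices adjacent to $S$ is
\[
\Delta=\sum_{i=1}^{c(f)-1}[\lnot D_i]\,(b_{i+1}-b_i)\;-\;[\lnot D_{c(f)}]\,b_{c(f)}.
\]
Here I would invoke the ordering $m_{11}>\cdots>m_{c(f)1}$: since $m_{i1}$ is strictly decreasing, $b_i=[m_{i1}\leq k]$ is nondecreasing in $i$, so $b_{i+1}-b_i$ is nonzero at a single index and equals $1$ there. The sum thus contributes at most one $+1$ from the jump, while the tail contributes at most one $-1$, giving $\absf{\Delta}\leq 1$, which is exactly \eqref{nseq}.
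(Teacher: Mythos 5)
Your proposal is correct, and its skeleton is the paper's: define $\widetilde R$ as the restriction of $R$ from Lemma \ref{arratiabij}, inherit injectivity and the edge bound, observe that every cycle's last element lies in $S^{c}$, and control \eqref{nseq} through the fact that the cycle minima $m_{11}>m_{21}>\cdots>m_{c(f)1}$ cross the threshold $k$ at most once. You diverge from the paper in two places, both to your advantage. First, surjectivity: the paper takes the unique $R$-preimage $f$ of a tree in \eqref{dbtreeset} and then argues using the hypothesis $f(S)\subset S^{c}$ --- i.e., as written it verifies only the forward inclusion (that $f(S)\subset S^{c}$ implies $R(f)$ lies in \eqref{dbtreeset}), leaving the converse implicit. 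Your argument supplies that converse: reconstructing $f|_{\cM}$ from the maximal ascending runs of the core path, you use independence of $S$ together with $r_{2}\in S^{c}$ to rule out both a run of length at least two ending in $S$ and a length-one run (a fixed point) at an $S$-vertex, hence $f(S)\subset S^{c}$. Second, \eqref{nseq}: the paper counts deleted versus added $S$-to-$S^{c}$ edges, a count that bounds the change in $N_{S}$ only because the relevant edges are incident to the same cycle-end vertices; your indicator computation with $D_{i}$ and the nondecreasing sequence $b_{i}$ makes that incidence explicit, isolating the only two vertices whose adjacency status can change (the cycle-end at the jump index, which can gain an $S$-neighbour, and $v_{c(f)}$, which can lose one), and so delivers $\absf{\Delta}\leq1$ with no gaps. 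In short: same route as the paper, but your version of the surjectivity step and of the proof of \eqref{nseq} is the one that closes the argument completely.
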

\begin{proof}
Let $\widetilde{R}$ be $R$ from Lemma \ref{arratiabij}, restricted to the set \eqref{fsrest}.  Since $R$ itself is a bijection by Lemma \ref{arratiabij}, $\widetilde{R}$ is also injective.  We therefore show that $\widetilde{R}$ is surjective.  Let $(T,r_{1},r_{2})$ in the set \eqref{dbtreeset}, so that $T$ is a tree, $r_{1},r_{2}\in\{1,\ldots,n\}$ and $r_{2}\in S^{c}$.  From Lemma \ref{arratiabij}, there exists a unique $f\colon\{1,\ldots,n\}\to\{1,\ldots,n\}$ such that $R(f)=(T,r_{1},r_{2})$.  As in Lemma \ref{arratiabij}, we write $\cM=\cup_{i=1}^{c(f)}\cM_{i}$, where $\cM_{1},\ldots,\cM_{c(f)}$ are subsets of vertices corresponding to the disjoint cycles of $f|_{\cM}$.  For each $1\leq i\leq c(f)$, denote $\cM_{i}=\{m_{i1},\ldots,m_{ik(i)}\}$, where $k(i)\colonequals\abs{\cM_{i}}$, $m_{i1}$ is the smallest element of $\cM_{i}$, and $m_{i(j+1)}=f(m_{ij})$ for all $1\leq j<k(i)$.  [That is, we can write $\cM_{i}$ in cycle notation as $(m_{i1}\cdots m_{ik(i)})$, for all $1\leq i\leq c(f)$.]  We also choose the ordering on $\cM_{1},\ldots,\cM_{c(f)}$ such that $m_{i1}>m_{(i+1)1}$ for all $1\leq i\leq c(f)-1$.  [That is, we order the cycles in the reverse order of their smallest elements.]

Let $1\leq i\leq c(f)$.  Since $S=\{1,\ldots,k\}$, if $\cM_{i}\cap S\neq\emptyset$, it follows that $m_{i1}\in S$, since $m_{i1}$ is the smallest element of $\cM_{i}$.  Since $f(S)\subset S^{c}$, if $m_{i1}\in S$, then $m_{ik(i)}\notin S$.  And if $\cM_{i}\cap S=\emptyset$, then $m_{i1}\notin S$ and $m_{ik(i)}\notin S$.  In either case, we have $m_{ik(i)}\notin S$.  In particular $r_{2}\colonequals m_{c(f)k(c(f))}\notin S$.  Since $S=\{1,\ldots,k\}$ and $m_{11}>m_{21}>\cdots>m_{c(f)1}$, there exists some integer $z$ satisfying $0\leq z\leq c(f)$ such that
\begin{equation}\label{sordereq}
m_{i1}\in S\,\,\forall\,z< i\leq c(f),\qquad\mathrm{and}\qquad m_{i1}\notin S\,\,\forall\, 1\leq i\leq z.
\end{equation}
(In the case $z=0$, we have $m_{i1}\notin S$ for all $1\leq i\leq c(f)$, and in the case $z=c(f)$ we have $m_{i1}\in S$ for all $1\leq i\leq c(f)$.)  That is, $z$ is the number of cycles whose smallest element is an element of $S^{c}$.

Since $f(S)\subset S^{c}$, $S$ is an independent set in the mapping directed graph of $f$.  So, in order for $S$ to be an independent set in $R(f)$, we only need to check that $R$ does not add any edges from $S$ to itself.  That is, we need a guarantee that \eqref{edgeeqr} does not add an edge from $S$ to itself.  The only new edges added to $R(f)$ are those specified in the right-most term of \eqref{edgeeqr}, and none of these edges go from $S$ to itself by \eqref{sordereq}.  Therefore, $S$ is an independent set in $R(f)$.  We have already shown that $r_{2}\colonequals m_{c(f)k(c(f))}\notin S$, so that $R(f)$ is a doubly rooted tree whose second root is in $S^{c}$.  It then remains to show that \eqref{nseq} holds, but this again follows from \eqref{edgeeqr} and \eqref{sordereq}.  These equations imply that $R$ deletes exactly $c(f)-z$ edges from $S$ to $S^{c}$ (one for each of the $c(f)-z$ cycles $\cM_{z+1},\ldots,\cM_{c(f)}$), and it then adds exactly $\max(z-1,0)$ edges from $S$ to $S^{c}$ in the right-most term of \eqref{edgeeqr} (one for each term $m_{(z+1)1},\ldots,m_{c(f)1}$.)
\end{proof}

\begin{remark}
It follows from the matrix-tree theorem that the number of labelled trees on $n>k$ vertices where vertices $\{1,\ldots,k\}$ form an independent set is
$$(n-k)^{k-1}n^{n-k-1}.$$
This fact also follows from Lemma \ref{joyalbijv3}.  It is unclear if other consequences of the matrix-tree theorem can also follow from Lemma \ref{joyalbijv3}.
\end{remark}

\section{Inequalities From Mappings to Trees}

In this section we demonstrate that the (randomized) Joyal bijection can find the distribution of some quantities on random trees.

As above, let $S\subset\{1,\ldots,n\}$, and let $k\colonequals\abs{S}$.  Let $\alpha\colonequals k/n$.  The following proposition is a corollary of the matrix-tree theorem, but it also follows from Lemma \ref{arratiabij}.

\begin{prop}\label{treeprop1}
Let $T$ be a uniformly random tree on $n$ vertices, conditioned on $S$ being an independent set.  Let $N$ be the number of vertices in $S^{c}$ not connected to $S$.  Then
$$\E N=n(1-\alpha)^{2}e^{-\alpha/(1-\alpha)}(1+o_{n}(1)).$$
\end{prop}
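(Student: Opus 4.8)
The plan is to move the computation to random mappings, where $N$ decomposes as a sum of indicators with easily computed probabilities, by inverting the restricted bijection $\widetilde{R}$ of Lemma \ref{joyalbijv3}. First I would check that pushing a uniform mapping forward through $\widetilde{R}$ yields a uniform tree. Let $f$ be uniform on the set \eqref{fsrest} of mappings with $f(S)\subset S^c$. Since $\widetilde{R}$ is a bijection onto \eqref{dbtreeset}, the doubly rooted tree $\widetilde{R}(f)$ is uniform on \eqref{dbtreeset}. For any fixed unrooted tree in which $S$ is independent, the number of admissible ordered root pairs $(r_1,r_2)$ (namely $r_1\in\{1,\ldots,n\}$ arbitrary and $r_2\in S^c$) equals $n(n-k)$, independent of the tree. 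Hence, after deleting the roots via $\rho$, the unrooted tree $\rho(\widetilde{R}(f))$ is uniform over all trees in which $S$ is independent, i.e.\ it has the same law as $T$. As $N$ depends only on the undirected edge set, $\E N=\E[N(\widetilde{R}(f))]$.

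Next I would reduce this to a mapping expectation using \eqref{nseq}. Because $f(S)\subset S^c$, there is no edge inside $S$ in either the mapping directed graph of $f$ or in $\widetilde{R}(f)$, so $S$ is independent in both, and all $k=\abs{S}$ vertices of $S$ fail to be connected to $S$. Thus $N_S=N+k$ in both graphs, where $N_S$ counts all vertices not connected to $S$ and $N$ counts only those in $S^c$. Subtracting the common offset $k$ from \eqref{nseq} gives $\abs{N(\widetilde{R}(f))-N(f)}\le1$, hence $\abs{\E N-\E[N(f)]}\le1$, where $N(f)$ now denotes the number of vertices of $S^c$ not connected to $S$ in the mapping directed graph.

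The remaining step is the direct computation of $\E[N(f)]$. Under the uniform law on \eqref{fsrest} the values $f(x)$ are independent, with $f(x)$ uniform on $S^c$ when $x\in S$ and uniform on $\{1,\ldots,n\}$ when $x\in S^c$. A vertex $v\in S^c$ is not connected to $S$ precisely when its out-edge avoids $S$ (that is $f(v)\in S^c$) and no vertex of $S$ maps to it (that is $f(s)\neq v$ for all $s\in S$); by independence this event has probability $\frac{n-k}{n}\bigl(1-\frac{1}{n-k}\bigr)^{k}$. Summing over the $n-k$ vertices of $S^c$ and using $(n-k)\cdot\frac{n-k}{n}=n(1-\alpha)^2$ gives $\E[N(f)]=n(1-\alpha)^2\bigl(1-\frac{1}{n-k}\bigr)^{k}$. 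Since $k/(n-k)=\alpha/(1-\alpha)$, the standard estimate $\bigl(1-\frac{1}{n-k}\bigr)^{k}=e^{-\alpha/(1-\alpha)}(1+o_n(1))$ then yields the claimed asymptotic, the $O(1)$ error from the previous paragraph being absorbed into the relative error whenever the main term $n(1-\alpha)^2e^{-\alpha/(1-\alpha)}$ grows with $n$.

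I expect the main obstacle to be the exactness of the transfer rather than the final estimate: one must argue carefully that the underlying tree of $\widetilde{R}(f)$ is uniform (via the constant root-count $n(n-k)$) and that the two $+k$ offsets coming from the independence of $S$ cancel, so that \eqref{nseq} controls the difference of the $S^c$-counts with error at most $1$. Once these two points are established, the probability estimate is a routine linearity-of-expectation calculation.
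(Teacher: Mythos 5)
Your proposal is correct and follows essentially the same route as the paper: compute $\E N$ for a uniform mapping conditioned on $f(S)\subset S^{c}$ using independence of the coordinates $f(x)$ (yielding the same product $(n-k)\bigl(1-\tfrac{1}{n-k}\bigr)^{k}(1-k/n)$), then transfer to the random tree via Lemma \ref{joyalbijv3} and \eqref{nseq}. The only difference is that you spell out two points the paper leaves implicit --- uniformity of the underlying unrooted tree (via the constant root-count $n(n-k)$) and the cancellation of the $+k$ offsets relating $N_{S}$ to $N$ --- which is a faithful elaboration rather than a different argument.
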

\begin{proof}
Let $F$ be a uniformly random mapping from $\{1,\ldots,n\}\to\{1,\ldots,n\}$ conditioned on $F(S)\subset S^{c}$.  For any $x\in S^{c}$, let $N_{x}$ be $1$ if $x$ is not connected to $S$, and let $N_{x}$ be $0$ otherwise.  Then
$$\P(N_{x}=1)=\Big(1-\frac{1}{n-k}\Big)^{k}(1-k/n),\qquad\forall\,x\in S^{c}.$$
So,
\begin{flalign*}
\E N
&=\E\sum_{x\in S^{c}}N_{x}
=(n-k)\Big(1-\frac{1}{n-k}\Big)^{k}(1-k/n)\\
&=(1+o(1))e^{-k/(n-k)}n(1-\alpha)^{2}
=n(1-\alpha)^{2}e^{-\alpha/(1-\alpha)}(1+o(1)).
\end{flalign*}
Lemma \ref{joyalbijv3} then completes the proof.
\end{proof}
\begin{remark}
With no constraints on $F$, we have
$$\E N=(n-k)\Big(1-\frac{1}{n}\Big)^{k}(1-k/n)=n(1-\alpha)^{2}e^{-\alpha}(1+o(1)).$$
\end{remark}
\begin{remark}
It follows from the Azuma-Hoeffding Inequality, Lemma \ref{azuma} (using e.g. the Aldous-Broder algorithm \cite{broder89,aldous90} to construct a tree one edge at a time, as a martingale) and Proposition \ref{treeprop1} that $\forall$ $t>0$,
$$\P(\abs{N-\E N}>t\E N\,|\,S\,\,\mathrm{is}\,\,\mathrm{an}\,\,\mathrm{independent}\,\,\mathrm{set})\leq2e^{-(n-1)\frac{t^{2}(1-\alpha)^{4}e^{-2\alpha/(1-\alpha)}}{2}(1+O(\log n/n))}.$$
However, this inequality can be improved to Lemma \ref{mainapp}, and this is important for our application to independent sets in uniformly random labelled trees.
\end{remark}

Below we will Prove Lemma \ref{mainapp}.  We will use several properties of negatively associated (NA) random variables from \cite{joag83}.  A function $f\colon\R^{k}\to\R$ is said to be increasing if for any $1\leq i\leq k$, and for any $x_{1},\ldots,x_{k},x_{i}'\in\R$ with $x_{i}\leq x_{i}'$, we have $f(x_{1},\ldots,x_{k})\leq f(x_{1},\ldots,x_{i-1},x_{i}',x_{i+1},\ldots,x_{k})$.  Real-valued random variables $X_{1},\ldots,X_{k}$ are said to be \textbf{negatively associated}, denoted \textbf{NA}, if for any disjoint subsets $A,B\subset\{1,\ldots,k\}$, and for any increasing functions $f\colon\R^{\abs{A}}\to\R$ $g\colon\R^{\abs{B}}\to\R$ such that the following expression is well-defined,
$$\E f(\{X_{i}\}_{i\in A})g(\{X_{i}\}_{i\in B})- \E f(\{X_{i}\}_{i\in A})\cdot \E g(\{X_{i}\}_{i\in B})\leq0.$$
An equivalent definition can be made by requiring both $f$ and $g$ to be decreasing.

Here are some properties of NA random variables, listed in \cite[Page 288]{joag83}.
\begin{itemize}
\item[(i)] A set of independent random variables is NA.
\item[(ii)] Increasing functions defined on disjoint subsets of a set of NA random variables are NA.  (Similarly, decreasing functions defined on disjoint subsets of a set of NA random variables are NA.)
\item[(iii)]  The disjoint union of independent families of NA random variables is NA.
\end{itemize}

\begin{proof}[Proof of Lemma \ref{mainapp}]

Let $f\colon\{1,\ldots,n\}\to\{1,\ldots,n\}$ be a uniformly random mapping, conditioned on the event $f(S)\subset S^{c}$.  Let $V'\colonequals \{1,\ldots,n\}\setminus(S\cup f(S))$ to be the vertices in $S^{c}$ that are not in the image of $f(S)$.  For any $x\in S^{c}$, let $N_{x}\colonequals 1_{x\in V'}$.  The distribution of $N'\colonequals\abs{V'}$ is well-known as the classical occupancy problem.  It is also well known, that the random variables $\{N_{x}\}_{x\in S^{c}}$ are NA \cite{dubhashi98,bartroff18}.  Now, for any $x\in S^{c}$, let $M_{x}\colonequals 1_{f(x)\notin S}$.  Since $f$ is a uniformly random mapping, the random variables $\{M_{x}\}_{x\in S^{c}}$ are independent of each other, and independent of $\{N_{x}\}_{x\in S^{c}}$.  So, the random variables $\{M_{x}\}_{x\in S^{c}}$ are NA by Property (i) and the union of the random variables $\{N_{x}\}_{x\in S^{c}}\cup\{M_{x}\}_{x\in S^{c}}$ also is NA by Property (iii).  Since the minimum function is monotone decreasing, the random variables $\{\min(N_{x},M_{x})\}_{x\in S^{c}}$ are also NA by Property (ii).  Finally, define
$$\widetilde{N}\colonequals\sum_{x\in S^{c}}\min(N_{x},M_{x}).$$
(If we started the proof with a random tree instead of a uniformly random mapping, then $N$ would be equal to $\widetilde{N}$.)  Since $\widetilde{N}$ is the sum of NA random variables, $\widetilde{N}$ satisfies Chernoff bounds, Lemma \ref{chernoff}, by repeating the standard proof of Chernoff bounds, as noted e.g. in \cite[Proposition 29]{dubhashi98}.  We then transfer this inequality to the random tree by the last part of Lemma \ref{joyalbijv3}.  (The computation of $\E N$ was done in Proposition \ref{treeprop1}.)
\end{proof}

\begin{remark}
It is not obvious to the author how to apply the negative association property directly to random trees.  That is, we are not aware of a proof of Lemma \ref{mainapp} that uses the negative association property for random variables on trees, the main difficulty being lack of any obvious independence.  So, at present it seems necessary to use the bijection from Lemma \ref{joyalbijv3} to prove Lemma \ref{mainapp}.
\end{remark}

\section{Concentration Inequalities}\label{secconc}

These concentration inequalities are referenced elsewhere in the paper.

\begin{lemma}[\embolden{Chernoff Bounds}]\label{nchernoffreg}\label{chernoff}
Let $N$ be a binomial random variable with parameters $n$ and $p$.  Then
$$\P(\abs{N-\E N}>s\E N)\leq
e^{-\min(s,s^{2})\E N/3},\qquad\forall\,s>0.
$$
$$\P(N<(1-s)\E N)\leq
e^{-s^{2}\E N/2},\qquad\forall\,0<s<1
$$
$$\P(N>(1+s)\E N)\leq
e^{-s^{2}\E N/(2+s)},\qquad\forall\,s\geq0.
$$
\end{lemma}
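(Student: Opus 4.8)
The plan is to prove all three inequalities by the classical exponential-moment (Chernoff) method, handling the upper and lower tails separately and then assembling the two-sided estimate. Write $N=\sum_{i=1}^{n}X_{i}$ where $X_{1},\dots,X_{n}$ are i.i.d.\ Bernoulli$(p)$, and set $\mu\colonequals\E N=np$. For any real $t$, Markov's inequality applied to $e^{tN}$ gives $\P(N\geq a)\leq e^{-ta}\,\E e^{tN}$ when $t>0$, and $\P(N\leq a)\leq e^{-ta}\,\E e^{tN}$ when $t<0$. By independence $\E e^{tN}=(1+p(e^{t}-1))^{n}$, and the elementary bound $1+x\leq e^{x}$ yields the clean estimate $\E e^{tN}\leq e^{\mu(e^{t}-1)}$, which is the only probabilistic input required; everything after this is optimization in $t$ and one-variable calculus.

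For the upper tail I would take $t>0$, so that $\P(N\geq(1+s)\mu)\leq\exp\!\big(\mu(e^{t}-1-t(1+s))\big)$, and minimize the exponent over $t$. The minimizer is $t=\ln(1+s)>0$, giving the sharp multiplicative bound $\P(N\geq(1+s)\mu)\leq\exp\!\big(\mu(s-(1+s)\ln(1+s))\big)$. The third displayed inequality then follows from the calculus fact $s-(1+s)\ln(1+s)\leq -s^{2}/(2+s)$ for all $s\geq0$, which I would confirm by noting both sides vanish at $s=0$ and comparing derivatives.

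The lower tail is entirely parallel: taking $t<0$ and optimizing gives $t=\ln(1-s)<0$ and $\P(N\leq(1-s)\mu)\leq\exp\!\big(\mu(-s-(1-s)\ln(1-s))\big)$ for $0<s<1$. The second displayed inequality is then the elementary estimate $-s-(1-s)\ln(1-s)\leq -s^{2}/2$ on $(0,1)$, which reduces (after differentiating) to the familiar $\ln(1-s)\leq -s$.

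Finally, the two-sided bound comes from combining the one-sided estimates and splitting on the size of $s$. For $0<s<1$ one has $\min(s,s^{2})=s^{2}$, and since both $s^{2}/(2+s)\geq s^{2}/3$ and $s^{2}/2\geq s^{2}/3$, each tail is at most $e^{-s^{2}\mu/3}$. For $s\geq1$ the event $\{N<(1-s)\mu\}$ is empty, because $(1-s)\mu\leq0\leq N$, so only the upper tail survives; there $\min(s,s^{2})=s$ and $s^{2}/(2+s)\geq s/3$ (equivalently $2s\geq2$) gives the claim. I expect the only genuine subtlety here to be constant bookkeeping rather than any new idea: naively summing the two one-sided tails for $0<s<1$ produces a leading factor of $2$, so matching the single exponential $e^{-\min(s,s^{2})\mu/3}$—rather than $2e^{-\min(s,s^{2})\mu/3}$—relies on the slack between the sharp exponents $s^{2}/(2+s)$, $s^{2}/2$ and the weaker target $s^{2}/3$, which is available precisely in the large-$\mu$ regime ($\mu=\Theta(n)$) relevant to the application in Lemma \ref{mainapp}. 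The remaining work is just verifying the two one-variable inequalities above by differentiation.
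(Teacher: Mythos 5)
The paper itself gives no proof of this lemma: Section \ref{secconc} merely collects standard inequalities cited elsewhere in the text, so your proposal has to be judged against the classical argument, which is exactly the route you take. Your treatment of the two one-sided bounds is correct and complete: the moment bound $\E e^{tN}\leq e^{\mu(e^{t}-1)}$, the optimal choices $t=\ln(1+s)$ and $t=\ln(1-s)$, and the calculus inequalities $s-(1+s)\ln(1+s)\leq -s^{2}/(2+s)$ and $-s-(1-s)\ln(1-s)\leq -s^{2}/2$ are all standard and verifiable exactly as you indicate. Your handling of $s\geq 1$ in the two-sided bound (the lower-tail event is empty, and $s^{2}/(2+s)\geq s/3$ precisely when $s\geq1$) is also fine.

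The gap is in the two-sided bound for $0<s<1$, and you put your finger on it yourself but then waved it away. The union bound gives $\P(\abs{N-\E N}>s\E N)\leq e^{-s^{2}\E N/(2+s)}+e^{-s^{2}\E N/2}\leq 2e^{-s^{2}\E N/3}$, and the leading factor of $2$ cannot be absorbed by ``slack in the large-$\mu$ regime'': the lemma is asserted for every binomial $N$, with no lower bound on $\E N$, and as stated it is in fact false. Take $n=1$, $p=1/2$, $s=1/2$: then $N\in\{0,1\}$, so $\abs{N-\E N}=1/2>s\E N=1/4$ with probability $1$, while the claimed bound is $e^{-\min(s,s^{2})\E N/3}=e^{-1/24}<1$. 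So no proof can close this gap; the correct statement of the first display carries a factor of $2$ (as in the standard references), or else needs a hypothesis of the form $\min(s,s^{2})\E N\geq C$ for a suitable constant $C$ --- which does hold in the paper's application, where the relevant mean is of order $n$, but is not part of the lemma (and note the same missing factor propagates into the statement of Lemma \ref{mainapp}). You should either prove the version with the factor $2$, which your argument already does completely, or state explicitly the largeness assumption under which the factor can be absorbed; as written, the final paragraph of your proposal asserts something unprovable.
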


\begin{lemma}[\embolden{Azuma-Hoeffding Inequality}{\cite{shamir87}}]\label{azuma}
Let $c>0$.  Let $Y_{0},\ldots,Y_{n}$ be a real-valued martingale with $Y_{0}$ constant and $\abs{Y_{m+1}-Y_{m}}\leq c$ for all $0\leq m\leq n-1$.  Then
$$\P(\abs{Y_{n}-Y_{0}}>t)\leq2e^{-\frac{t^{2}}{2c^{2}n}},\qquad\forall\,t>0.$$
\end{lemma}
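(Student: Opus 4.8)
The plan is to prove this classical bound by the exponential moment method, i.e. the martingale analogue of the Chernoff bound. First I would normalize: since $Y_{0}$ is constant, replacing $Y_{m}$ by $Y_{m}-Y_{0}$ produces a martingale with the same increments and initial value $0$, so I may assume $Y_{0}=0$. It also suffices to prove the one-sided estimate $\P(Y_{n}>t)\leq e^{-t^{2}/(2c^{2}n)}$: applying this to the martingale $(-Y_{m})_{m=0}^{n}$, which satisfies the same increment bound, controls the lower tail, and a union bound then produces the factor $2$ in the claimed inequality. Write $D_{m}\colonequals Y_{m}-Y_{m-1}$ for the martingale differences and let $\mathcal{F}_{m}$ denote the $\sigma$-algebra generated by $Y_{0},\ldots,Y_{m}$, so that $\E[D_{m}\mid\mathcal{F}_{m-1}]=0$ and $\abs{D_{m}}\leq c$ for all $1\leq m\leq n$.

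The heart of the argument is a bound on the exponential moment. For any $\lambda>0$, Markov's inequality gives $\P(Y_{n}>t)\leq e^{-\lambda t}\,\E e^{\lambda Y_{n}}$. To control $\E e^{\lambda Y_{n}}=\E e^{\lambda\sum_{m=1}^{n}D_{m}}$, I would peel off the differences one at a time using the tower property: conditioning on $\mathcal{F}_{n-1}$,
$$\E e^{\lambda\sum_{m=1}^{n}D_{m}}=\E\Big[e^{\lambda\sum_{m=1}^{n-1}D_{m}}\,\E[e^{\lambda D_{n}}\mid\mathcal{F}_{n-1}]\Big].$$
The key step (Hoeffding's lemma) is the conditional bound $\E[e^{\lambda D_{m}}\mid\mathcal{F}_{m-1}]\leq e^{\lambda^{2}c^{2}/2}$. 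This follows from convexity of $x\mapsto e^{\lambda x}$ on $[-c,c]$, which yields the pointwise inequality $e^{\lambda D_{m}}\leq\frac{c-D_{m}}{2c}e^{-\lambda c}+\frac{c+D_{m}}{2c}e^{\lambda c}$; taking conditional expectations and using $\E[D_{m}\mid\mathcal{F}_{m-1}]=0$ gives $\cosh(\lambda c)$, and the elementary estimate $\cosh(u)\leq e^{u^{2}/2}$ (from comparing Taylor coefficients) finishes it.

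Iterating this bound over $m=n,n-1,\ldots,1$ gives $\E e^{\lambda Y_{n}}\leq e^{n\lambda^{2}c^{2}/2}$, so that $\P(Y_{n}>t)\leq e^{-\lambda t+n\lambda^{2}c^{2}/2}$ for every $\lambda>0$. Optimizing the exponent over $\lambda$ — the minimum of $-\lambda t+n\lambda^{2}c^{2}/2$ is attained at $\lambda=t/(nc^{2})$ — yields $\P(Y_{n}>t)\leq e^{-t^{2}/(2c^{2}n)}$, and combining this with the lower tail as above completes the proof. I expect the only genuine obstacle to be the conditional moment generating function estimate in the key step: one must handle the conditional expectation carefully, since the increments $D_{m}$ need not be independent but only form a conditionally mean-zero, uniformly bounded martingale difference sequence. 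Once that is in place, the convexity inequality, the $\cosh$ comparison, and the optimization in $\lambda$ are all routine.
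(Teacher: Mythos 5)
Your proposal is correct: the reduction to the one-sided tail, the conditional Hoeffding lemma obtained from convexity together with $\cosh(u)\leq e^{u^{2}/2}$, the tower-property iteration of $\E[e^{\lambda D_{m}}\mid\mathcal{F}_{m-1}]\leq e^{\lambda^{2}c^{2}/2}$, and the optimizing choice $\lambda=t/(nc^{2})$ all go through exactly as you describe, yielding the stated bound with the factor $2$ coming from the union bound over the two tails. The paper itself gives no proof of this lemma---it is invoked as a known result cited to \cite{shamir87}---so there is nothing to compare against; your argument is the standard exponential-moment (Chernoff-type) proof of the Azuma--Hoeffding inequality.
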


\begin{theorem}[\embolden{Talagrand's Convex Distance Inequality}, {\cite[Theorem 2.29]{janson11}}]\label{talagrand}
Let $Z_{1},\ldots,Z_{n}$ be independent random variables taking values in $\Gamma_{1},\ldots,\Gamma_{n}$, respectively.  Let $f\colon \Gamma_{1}\times\cdots\times\Gamma_{n}\to\R$.  Let $X\colonequals f(Z_{1},\ldots,Z_{n})$.  Suppose there are constants $c_{1},\ldots,c_{n}\in\R$ and $\psi\colon\R\to\R$ such that
\begin{itemize}
\item Let $1\leq k\leq n$.  If $z,z'\in\Gamma\colonequals\prod_{i=1}^{n}\Gamma_{i}$ differ in only the $k^{th}$ coordinate, then $\abs{f(z)-f(z')}\leq c_{k}$.
\item If $z\in\Gamma$ and $r\in\R$ satisfy $f(z)\geq r$, then there exists a ``certificate'' $J\subset\{1,\ldots,n\}$ with $\sum_{i\in J}c_{i}^{2}\leq\psi(r)$ such that, for all $y\in\Gamma$ with $y_{i}=z_{i}$ for all $i\in J$, we have $f(y)\geq r$.
\end{itemize}
If $m$ is a median for $X$, then for every $t>0$,
$$\P(X\leq m-t)\leq 2e^{-t^{2}/(4\psi(m))}.$$
$$\P(X\geq m+t)\leq 2e^{-t^{2}/(4\psi(m+t))}.$$
\end{theorem}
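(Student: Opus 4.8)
The plan is to deduce both tail bounds from Talagrand's master \emph{convex distance inequality} and to prove that master inequality by induction on the number $n$ of independent coordinates. First I would fix a set $A\subset\Gamma$ and, for $z\in\Gamma$, introduce the convex distance
$$d_{T}(z,A)\colonequals\sup_{\substack{\alpha\in\R^{n},\,\alpha\geq0\\ \vnorm{\alpha}_{2}\leq1}}\,\inf_{y\in A}\,\sum_{i\,:\,z_{i}\neq y_{i}}\alpha_{i},$$
together with its dual description $d_{T}(z,A)=\min\{\vnorm{h}_{2}\colon h\in\mathrm{conv}\{s(z,y)\colon y\in A\}\}$, where $s(z,y)_{i}\colonequals 1_{z_{i}\neq y_{i}}$ and the minimizer $h^{*}$ is the projection of the origin onto the polytope, so that $\langle s(z,y),h^{*}\rangle\geq\vnorm{h^{*}}_{2}^{2}$ for all $y\in A$. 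Writing $Z=(Z_{1},\ldots,Z_{n})$, the inequality I would target is
$$\P(Z\in A)\cdot\E\exp\Big(\tfrac14 d_{T}(Z,A)^{2}\Big)\leq1.$$

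For the induction, the base case $n=1$ is a one-variable computation since then $d_{T}\in\{0,1\}$. For the inductive step I would condition on the last coordinate: writing $z=(w,\omega)$, letting $A_{\omega}\colonequals\{w'\colon(w',\omega)\in A\}$ be the slices and $\pi(A)\colonequals\bigcup_{\omega}A_{\omega}$ the projection, the key geometric recursion is that for every $\theta\in[0,1]$,
$$d_{T}((w,\omega),A)^{2}\leq(1-\theta)^{2}+\big(\theta\, d_{T}(w,A_{\omega})+(1-\theta)\,d_{T}(w,\pi(A))\big)^{2},$$
obtained by mixing an optimal feasible vector for the slice $A_{\omega}$ with an optimal feasible vector for $\pi(A)$ carrying a $1$ in the last coordinate. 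Applying $\E$ over $Z_{n}$ (which is independent of $W\colonequals(Z_{1},\ldots,Z_{n-1})$), using Hölder to split the expectation over $W$ from that over $Z_{n}$, invoking the inductive hypothesis on both $\E\exp(\tfrac14 d_{T}(W,A_{\omega})^{2})$ and $\E\exp(\tfrac14 d_{T}(W,\pi(A))^{2})$, and finally optimizing the resulting scalar function over $\theta$, closes the induction with the sharp constant $\tfrac14$. From the master inequality, Markov's inequality yields $\P(d_{T}(Z,A)\geq s)\leq e^{-s^{2}/4}/\P(Z\in A)$.

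It then remains to convert the two bullet hypotheses into a lower bound on $d_{T}$. For the upper tail, let $m$ be a median and take $A\colonequals\{f\leq m\}$, so $\P(Z\in A)\geq\tfrac12$. If $f(x)\geq m+t$, the certificate bullet at level $r=m+t$ gives $J$ with $\sum_{i\in J}c_{i}^{2}\leq\psi(m+t)$ so that every point agreeing with $x$ on $J$ has value $\geq m+t$. Set $\alpha_{i}\colonequals c_{i}1_{i\in J}/\sqrt{\sum_{j\in J}c_{j}^{2}}$. For any $y\in A$, modifying $y$ on $D\colonequals\{i\in J\colon x_{i}\neq y_{i}\}$ to agree with $x$ produces a point matching $x$ on $J$, hence of value $\geq m+t$; the coordinatewise Lipschitz bullet then forces $\sum_{i\in D}c_{i}\geq t$, so $\sum_{i\colon x_{i}\neq y_{i}}\alpha_{i}\geq\sum_{i\in D}\alpha_{i}\geq t/\sqrt{\psi(m+t)}$. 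Thus $d_{T}(x,A)\geq t/\sqrt{\psi(m+t)}$, and Markov with the factor $\P(A)^{-1}\leq2$ gives $\P(X\geq m+t)\leq 2e^{-t^{2}/(4\psi(m+t))}$. The lower tail is analogous but uses $A\colonequals\{f\geq m\}$ (again of probability $\geq\tfrac12$) and the certificates of the points of $A$, combined with the supporting-hyperplane description $\langle s(x,y),h^{*}\rangle\geq\vnorm{h^{*}}_{2}^{2}$ of the minimizer, to show $m-f(x)\leq\sqrt{\psi(m)}\,d_{T}(x,A)$, whence the constant $\psi(m)$ appears.

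I expect the main obstacle to be the inductive step for the master inequality: the scalar optimization over $\theta$ must interact precisely with the Hölder split and the inductive hypothesis to reproduce the constant $\tfrac14$, and this is where all the genuine analytic content sits. A secondary delicate point is the lower-tail reduction, where the certificate attaches to the points $y\in A=\{f\geq m\}$ rather than to $x$, so that a single test direction $\alpha$ must dominate all of them; reconciling the per-point certificates with one convex-distance direction (via the minimizing vector $h^{*}$ and careful $\ell^{1}$/$\ell^{2}$ bookkeeping through Cauchy--Schwarz) is the fiddly step that produces $\psi(m)$ in the exponent instead of $\psi(m+t)$. I would take $\psi$ to be nondecreasing without loss of generality to make the threshold-set inclusions clean.
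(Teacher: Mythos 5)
Your proposal reconstructs a proof of a theorem that the paper itself never proves: Theorem \ref{talagrand} is quoted from \cite[Theorem 2.29]{janson11} purely for reference (and is never actually used in the paper's arguments), so the relevant benchmark is the standard textbook proof. Your route is that proof: the induction on $n$ establishing the master inequality $\P(Z\in A)\,\E\exp(d_{T}(Z,A)^{2}/4)\leq1$ is sound (your slice/projection recursion is valid, and after Jensen it becomes the usual form $(1-\theta)^{2}+\theta\, d_{T}(w,A_{\omega})^{2}+(1-\theta)\,d_{T}(w,\pi(A))^{2}$, which is what the H\"older step actually needs --- H\"older splits the two exponential factors inside the $W$-expectation for fixed $\omega$, not $W$ from $Z_{n}$), and your upper-tail conversion --- basing the convex distance at a point $x$ with $f(x)\geq m+t$, using the certificate $J$ of $x$ to build the single test direction $\alpha_{i}=c_{i}1_{i\in J}/(\sum_{j\in J}c_{j}^{2})^{1/2}$, and taking $A=\{f\leq m\}$ --- is exactly correct.

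The lower tail, however, contains a genuine gap, not merely a fiddly step. Your claim that $m-f(x)\leq\sqrt{\psi(m)}\,d_{T}(x,A)$ for $A\colonequals\{f\geq m\}$ (distance based at the \emph{low} point, certificates attached to the points of $A$) is false, and no bookkeeping with the minimizer $h^{*}$ can rescue it. Counterexample: let $Z_{1},\ldots,Z_{n}$ be uniform bits, $f(z)=\sum_{i=1}^{n}z_{i}$, $c_{i}=1$, $\psi(r)=\lceil r\rceil$ (certify $f\geq r$ by $\lceil r\rceil$ coordinates equal to $1$), $x=(0,\ldots,0)$, and $m\sim n/2$ the median. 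Averaging $s(x,y)$ over all $y$ with exactly $\lceil m\rceil$ ones shows $(\lceil m\rceil/n)\mathbf{1}\in\mathrm{conv}\{s(x,y)\colon y\in A\}$, so $d_{T}(x,A)\leq\lceil m\rceil/\sqrt{n}\sim\sqrt{n}/2$, while your claim demands $d_{T}(x,A)\geq m/\sqrt{\psi(m)}\sim\sqrt{n/2}$; taking instead $f$ to be the maximum of block sums over $K$ disjoint blocks makes the claim fail by a factor of order $\sqrt{K}$, so no constant-factor weakening survives either. The convex distance is genuinely asymmetric in its two arguments, and the per-point certificates of $A$ cannot be merged into one test direction. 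The repair is standard and simpler than what you propose: keep the certificates attached to the high points, exactly as in your upper tail. With $A\colonequals\{f\leq m-t\}$, every $x$ with $f(x)\geq m$ satisfies $d_{T}(x,A)\geq t/\sqrt{\psi(m)}$ (same argument, at level $r=m$), so the master inequality gives $\tfrac12\leq\P(X\geq m)\leq e^{-t^{2}/(4\psi(m))}/\P(X\leq m-t)$, i.e. $\P(X\leq m-t)\leq2e^{-t^{2}/(4\psi(m))}$. Both tails are then instances of the single inequality $\P(X\leq r-t)\,\P(X\geq r)\leq e^{-t^{2}/(4\psi(r))}$, applied at $r=m$ and $r=m+t$; note that $\psi$ is always evaluated at the level where the certificates live, never at the low level.
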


\section{Algorithmic Interpretation of Bijection}\label{secalg}

The bijection $R$ from Lemma \ref{arratiabij} also gives an algorithm for sampling from uniformly random trees on $n$ vertices.  Pr\"{u}fer codes themselves give a somewhat elementary way to generate random trees, though Algorithm \ref{treealg} is arguably more elementary.  The Aldous-Broder algorithm \cite{broder89,aldous90} is perhaps the most elementary way to generate a uniformly random labelled tree on $n$ vertices, though its run time is $O(n^{2})$.

\begin{algorithm}[\embolden{Sampling a Uniformly Random Labelled Tree on $n$ vertices}]\label{treealg}
\hfill
\begin{itemize}
\item The input of the algorithm is a random mapping $f\colon\{1,\ldots,n\}\to\{1,\ldots,n\}$, presented as a list $(f(1),\ldots,f(n))$ of $n$ independent identically distributed random variables, each uniformly distributed in $\{1,\ldots,n\}$.
\item The output of the algorithm is a uniformly random labelled tree on $n$ vertices.
\end{itemize}
The algorithm proceeds as follows.
\begin{itemize}
\item[(i)] Compute the core of $f$, via a standard algorithm such as Algorithm \ref{corealg}.  The core is the set of $1\leq k\leq n$ cycles of $f$, written as $C_{1},\ldots,C_{k}$.
\item[(ii)] For each $1\leq i\leq k$, the smallest element of $C_{i}$ is given the left-most position in the cycle notation for $C_{i}$.  (so e.g. the cycle $(365)$ is written rather than $(653)$).
\item[(iii)] Arrange the cycles in reverse age order, according to their smallest elements (so e.g. we write two cycles in the ordering $(365)(289)$ rather than $(289)(365)$.)
\item[(iv)] Let $C$ be the set of vertices in the core of $f$.  Output the tree formed by the edges
$$\{\{x,f(x)\}\colon x\in \{1,\ldots,n\}\setminus C\}$$
together with the path that passes through the cycles in the order specified by (iii).
\end{itemize}
\end{algorithm}

The proof of Theorem \ref{thm1} implies that the output of Algorithm \ref{treealg} is a uniformly random labelled tree on $n$ vertices with run time $O(n)$.

\begin{algorithm}[\embolden{Computing the Core of a Mapping}]\label{corealg}
\hfill
\begin{itemize}
\item The input of the algorithm is a mapping $f\colon\{1,\ldots,n\}\to\{1,\ldots,n\}$, presented as a list $(f(1),\ldots,f(n))$.
\item The output of the algorithm is the set of $1\leq k\leq n$ cycles of $f$ presented in cycle notation as $C_{1},\ldots,C_{k}$ (so e.g. $C_{1}=(245)$ indicates that $f(2)=4,f(4)=5$ and $f(5)=2$.)
\end{itemize}
The algorithm proceeds as follows.  Let $B\colonequals\emptyset$, $k\colonequals 0$.  While $B\neq\{1,\ldots,n\}$, repeat the following procedure.
\begin{itemize}
\item Let $x\in\{1,\ldots,n\}\setminus B$.  Compute the sequence $f(x),f(f(x)),f(f(f(x))),\ldots$ until one element of the sequence is repeated (so that $f^{j}(x)=f^{k}(x)$ for some $1\leq j<k\leq n$).
\item Define $C_{k+1}\colonequals(f^{j}(x),f^{j+1}(x),\ldots,f^{k-1}(x))$.  This is the $(k+1)^{st}$ cycle of $f$.
\item Re-define $k$ to be one more than its previous value.  Also re-define $B$ to be $B$, less the set $\{x,f(x),f^{2}(x),\ldots,f^{k}(x)\}$.
\end{itemize}
\end{algorithm}
\section{Appendix: Cycle Distributions}\label{secapp}

Let $C_{n}(f)$ be the number of cycles in a mapping  $f\colon\{1,\ldots,n\}\to\{1,\ldots,n\}$.  The following Lemma is sketched in \cite{harris60}.  We give a detailed proof.
\begin{lemma}[\embolden{Cycle Distribution of a Random Mapping}]\label{uniformcycle}
$$\P(C_{n}>(1+t)\log n)\leq (1+o_{n}(1))e^{-\frac{t^{2}}{2+t}(\log n)/4},\qquad\forall\, t>0.$$
\end{lemma}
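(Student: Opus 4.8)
The plan is to reduce the number of cycles $C_n$ of a uniformly random mapping to a sum of \emph{independent} Bernoulli random variables and then to apply the multiplicative Chernoff bound of Lemma~\ref{chernoff}. The reduction I would carry out has three stages: pass from the mapping to the permutation it induces on its core, show that permutation is conditionally uniform, and then invoke the classical record (Feller-coupling) representation of the cycle count of a uniform permutation. For \emph{Step 1}, recall from Lemma~\ref{permlem} that $f|_{\cM}$ is a permutation of the core $\cM$ and that $C_n$ equals the number of cycles of $f|_{\cM}$. I would then argue that, conditioned on $\{\cM=S\}$ for a fixed set $S$ with $\abs{S}=m$, the restriction $f|_{S}$ is uniform over the $m!$ permutations of $S$: a mapping with core exactly $S$ is specified by a permutation of $S$ together with a choice of images for the non-core vertices arranging $\{1,\ldots,n\}\setminus S$ into a rooted forest hanging off $S$, and by the generalized Cayley formula the number of such forests with root set $S$ is $m\,n^{\,n-m-1}$, which depends only on $m$ and not on the chosen permutation. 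Hence, conditioned on $\abs{\cM}=m$, the count $C_n$ is distributed as the number of cycles $K_m$ of a uniform permutation on $m$ elements.

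For \emph{Step 2}, I would use the fact that building a uniform permutation by inserting $1,2,\ldots,m$ one at a time makes the $k$-th element open a new cycle with probability $1/k$, independently of the earlier choices, so $K_m\stackrel{d}{=}\sum_{k=1}^{m}B_k$ with $B_1,\ldots,B_m$ independent and $\P(B_k=1)=1/k$. Fixing a single family $B_1,\ldots,B_n$ of such indicators, one has $K_m\le K_n\colonequals\sum_{k=1}^{n}B_k$ pointwise for every $m\le n$ since the $B_k$ are nonnegative; as $\abs{\cM}\le n$ always, conditioning on $\abs{\cM}=m$ and summing yields $\P(C_n>a)\le\P(K_n>a)$ for every $a$. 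Here $K_n$ is a sum of independent Bernoullis with mean $\E K_n=H_n\colonequals\sum_{k=1}^{n}1/k$, and $\log n<H_n\le\log n+1$.

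For \emph{Step 3}, I would apply Lemma~\ref{chernoff} to $K_n$ (its proof via $\E e^{\lambda K_n}=\prod_k\E e^{\lambda B_k}$ is unchanged for non-identical summands). Writing the threshold as $(1+t)\log n=(1+s)H_n$, the bounds $\log n<H_n\le\log n+1$ force $s\ge t/2$ once $\log n\ge 1+2/t$; then, since $x\mapsto x^2/(2+x)$ is increasing and $H_n\ge\log n$,
$$\frac{s^{2}}{2+s}\,H_n\ \ge\ \frac{(t/2)^{2}}{2+t/2}\,\log n\ \ge\ \frac14\,\frac{t^{2}}{2+t}\,\log n,$$
which gives $\P(C_n>(1+t)\log n)\le e^{-\frac{t^{2}}{2+t}(\log n)/4}$ for all large $n$. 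The prefactor $1+o_n(1)$ then absorbs the finitely many small values of $n$ (where the stated threshold on $\log n$ may fail) together with the $H_n$-versus-$\log n$ discrepancy.

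I expect the main obstacle to be the conditional-uniformity claim in Step~1, whose content is that the forest count $m\,n^{\,n-m-1}$ is independent of the core permutation; everything downstream is standard. The only other friction is the constant-chasing in Step~3 — converting a deviation measured against $H_n$ into one measured against $\log n$ — which is precisely the source of the factor $1/4$ (rather than the sharper $1$) and of the $1+o_n(1)$. An alternative route closer to Harris~\cite{harris60} would estimate $\E u^{C_n}$ directly by singularity analysis of the cycle-weighted exponential generating function $(1-T(x))^{-u}$ (with $T$ the rooted-tree series) and optimize a Markov bound over $u>1$; this yields a sharper exponential rate but demands more analytic machinery than the elementary argument sketched above.
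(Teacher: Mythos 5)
Your proposal is correct, and it reaches the stated bound by a genuinely different route than the paper. The two shared ingredients are the conditional uniformity of $f|_{\cM}$ given the core (which the paper only asserts, and you actually justify by observing that the event $\{\cM=S\}$ factors into a condition on $f|_{S}$ and a condition on $f|_{S^{c}}$, the forest count $m\,n^{n-m-1}$ being independent of the permutation on $S$), and the representation of the cycle count of a uniform permutation as a sum of independent Bernoulli indicators, combined with the Chernoff bound of Lemma \ref{chernoff} (whose proof indeed goes through unchanged for non-identical summands). Where you diverge is the third step. The paper conditions on the core size $Y_{n}$ and devotes the bulk of its proof to a Stirling-type analysis of Harris's formula for $\P(Y_{n}=k)$, establishing $\P\big(Y_{n}\notin\sqrt{n}\,[\sqrt{\epsilon},\sqrt{\log(1/\epsilon)}]\big)=(1+o(1))\epsilon$ as in \eqref{coresize}; its factor $1/4$ then arises from $\log m\geq\tfrac12[\log n-\log(1/\epsilon)]$ with the choice $\epsilon=n^{-1/2}$. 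You bypass the core-size distribution entirely: since $K_{m}=\sum_{k\leq m}B_{k}$ is pointwise monotone in $m$ under the Feller coupling, $C_{n}$ is stochastically dominated by $K_{n}$, the cycle count of a uniform permutation on all $n$ elements, and Chernoff applies directly; your factor $1/4$ comes only from the crude conversion $s\geq t/2$ between deviations measured against $H_{n}$ and against $\log n$. Your route is shorter, fully elementary, and in fact stronger: it gives prefactor $1$ (no $1+o_{n}(1)$) once $\log n\geq 1+2/t$, and since actually $s=t-o_{n}(1)$, it would yield the better exponent $(1-o_{n}(1))\tfrac{t^{2}}{2+t}\log n$ had you not rounded $s$ down to $t/2$. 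What the paper's longer route buys is the core-size estimate itself, a fact of independent interest (it is what justifies the introduction's claim that the Joyal bijection moves about $\sqrt{n}$ edges), and, in principle, sharper constants for small $t$: the true mean of $C_{n}$ is about $\tfrac12\log n$, so $(1+t)\log n$ is a larger relative deviation than your domination by $K_{n}$ (mean about $\log n$) can detect — though the paper as written does not exploit this either.
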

\begin{proof}
Let $\Pi$ be a uniformly random element of the group $S_{n}$ of permutations on $n$ elements.  As in \cite[Lemma 2.2.9]{durrett19}, let $X_{n,k}(\pi)\colonequals1$ if a right parenthesis occurs after entry $k$ in the standard cycle notation of the permutation $\pi\in S_{n}$, and $X_{n,k}(\pi)\colonequals0$ otherwise.  Then $X_{n,1},\ldots,X_{n,n}$ are independent random variables with $\P(X_{n,k}=1)=1/(n-k+1)$.  Let $C(\pi)$ be the number of cycles in $\pi\in S_{n}$.  Then $C=C_{n}=\sum_{k=1}^{n}X_{n,k}$, $\E C_{n}=\sum_{k=1}^{n}\E X_{n,k}$, and from Chernoff's bound \ref{chernoff},
\begin{equation}\label{chern1}
\P(C_{n}(\Pi)\geq (1+t)\E C_{n}(\Pi))\leq e^{-\frac{t^{2}}{2+t}\E C_{n}(\Pi)},\qquad\forall\,t>0.
\end{equation}
When we condition a random mapping $f\colon\{1,\ldots,n\}\to\{1,\ldots,n\}$ such that its core size $\abs{\cM}$ is constant, then $f|_{\cM}$ is a uniformly random permutation on $\abs{\cM}$ elements.

Let $Y_{n}(f)$ be the total number of vertices in all cycles of a uniformly random mapping $f\colon\{1,\ldots,n\}\to\{1,\ldots,n\}$.  As in \cite{harris60}, $\P(Y_{n}=k)=\frac{k(n-1)!}{n^{k}(n-k)}$ for all $0\leq k\leq n$.  Then, $\forall$ $0<a<b<\sqrt{n}$,
\begin{flalign*}
&\P( a\sqrt{n}\leq Y_{n}\leq b\sqrt{n})
=\P(a<\frac{Y_{n}}{\sqrt{n}}<b)
=\sum_{k=a\sqrt{n}}^{b\sqrt{n}}\frac{k(n-1)!}{n^{k}(n-k)}
=\sum_{j=a,a+1/\sqrt{n},\ldots,b}\frac{j\sqrt{n}(n-1)!}{n^{j\sqrt{n}}(n-j\sqrt{n})!}\\
&\qquad=(1+o(1))\sum_{j=a,a+1/\sqrt{n},\ldots,b}j\sqrt{n}\sqrt{\frac{n-1}{n-j\sqrt{n}}}\frac{(n-1)^{n-1}e^{-n+1}}{n^{j\sqrt{n}}(n-j\sqrt{n})^{n-j\sqrt{n}}e^{-n+j\sqrt{n}}}\\
&\qquad=(1+o(1))\sum_{j=a,a+1/\sqrt{n},\ldots,b}je^{1}\sqrt{n}\Big(\frac{n-1}{n}\Big)^{n-1/2}\Big(\frac{n-j\sqrt{n}}{n}\Big)^{-n+j\sqrt{n}-1/2}e^{-j\sqrt{n}}\\
&\qquad=(1+o(1))\sum_{j=a,a+1/\sqrt{n},\ldots,b}je^{1}\sqrt{n}\Big(1-\frac{1}{n}\Big)^{n-1/2}\Big(1-\frac{j}{\sqrt{n}}\Big)^{-n+j\sqrt{n}-1/2}e^{-j\sqrt{n}}.
\end{flalign*}
We write
\begin{flalign*}
\Big(1-\frac{j}{\sqrt{n}}\Big)^{-n+j\sqrt{n}-1/2}
&=e^{[\log(1-j/\sqrt{n})](-n+j\sqrt{n}-1/2)}
=e^{[-j/\sqrt{n}-j^{2}/(2n)+O(j/\sqrt{n})^{3/2}](-n+j\sqrt{n}-1/2)}\\  
&=e^{j\sqrt{n}+j^{2}/2-j^{2}+O(1/\sqrt{n})}
=e^{j\sqrt{n}}e^{-j^{2}/2+O(1/\sqrt{n})}.
\end{flalign*}%
Therefore
\begin{flalign*}
\P( a\sqrt{n}\leq Y_{n}\leq b\sqrt{n})
&=(1+o(1))\sum_{j=a,a+1/\sqrt{n},\ldots,b}jn^{-1/2}e^{-j^{2}/2}\\
&=(1+o(1))\int_{a}^{b}xe^{-x^{2}/2}dx
=(1+o(1))(e^{-a^{2}/2}-e^{-b^{2}/2}).
\end{flalign*}
If $\epsilon>0$, then
\begin{equation}\label{coresize}
\P( Y_{n}\notin \sqrt{n}[\sqrt{\epsilon},\sqrt{\log(1/\epsilon)}])
=(1+o(1))(1-e^{-\epsilon/2}+e^{-\log(1/\epsilon)/2})
=(1+o(1))\epsilon.
\end{equation}

So, recalling that $C_{n}(f)$ is the number of cycles in a mapping  $f\colon\{1,\ldots,n\}\to\{1,\ldots,n\}$,
\begin{flalign*}
\P(C_{n}>(1+t)\log n)
&\leq (1+o(1))(1+\epsilon)\P\big(C_{n}>(1+t)\log n\,\,|\,\,Y_{n}\in \sqrt{n}[\sqrt{\epsilon},\sqrt{\log(1/\epsilon)}]\big)\\
&\stackrel{\eqref{chern1}}{\leq}(1+o(1))(1+\epsilon)\sup_{m\in\sqrt{n}\big[\sqrt{\epsilon},\sqrt{\log(1/\epsilon)}\big]}e^{-\frac{t^{2}}{2+t}\log m}
\leq e^{-\frac{t^{2}}{2+t}[\log n-\log(1/\epsilon)]/2}.
\end{flalign*}
Choosing $\epsilon\colonequals1/\sqrt{n}$,
$$\P(C_{n}>(1+t)\log n)\leq (1+o_{n}(1))e^{-\frac{t^{2}}{2+t}(\log n)/4}.$$
\end{proof}

\begin{remark}
It is tempting to try to apply Talagrand's convex distance inequality to prove Lemma \ref{uniformcycle} but it is not obvious to the author how to make such an argument.
\end{remark}

\subsection{Independent Set Case}

Let $S\subset\{1,\ldots,n\}$, and let $k\colonequals\abs{S}$.  Let $\alpha\colonequals k/n$.

Let $C_{n}(f)$ be the number of cycles in a mapping  $f\colon\{1,\ldots,n\}\to\{1,\ldots,n\}$.

\begin{lemma}\label{cyclerestrictlemma}
Let $F$ be a uniformly random mapping from $\{1,\ldots,n\}\setminus S\to\{1,\ldots,n\}\setminus S$.  Let $G$ be a random mapping, uniformly distributed over all $f\colon\{1,\ldots,n\}\to\{1,\ldots,n\}$ such that $f(S)\subset S^{c}$.  Then the random variables
$$C_{n-k}(F),\qquad C_{n}(G)$$
are identically distributed.
\end{lemma}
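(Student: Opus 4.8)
The plan is to reduce the cyclic structure of $G$ to that of an auxiliary map on $S^{c}$ alone, and then to argue that this auxiliary map is itself a uniformly random mapping of $S^{c}$ to itself. Concretely, given $G$ with $G(S)\subset S^{c}$, I would introduce the \emph{first-return map} $H\colon S^{c}\to S^{c}$ defined by $H(x)\colonequals G(x)$ when $G(x)\in S^{c}$, and $H(x)\colonequals G(G(x))$ when $G(x)\in S$. This is well defined because $G(S)\subset S^{c}$ forces $G(G(x))\in S^{c}$ whenever $G(x)\in S$; in words, $H$ follows $G$ but skips over the single $S$-vertex that may be encountered before returning to $S^{c}$.

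First I would show that $H$ preserves the number of cycles, i.e. $C_{n}(G)=C_{n-k}(H)$ pointwise. The key observation is that no cycle of $G$ can lie entirely inside $S$: a fixed point in $S$ is forbidden since $G(S)\subset S^{c}$, and in any longer cycle two consecutive vertices cannot both lie in $S$, again by $G(S)\subset S^{c}$. Hence every cycle of $G$ meets $S^{c}$, and reading off the $S^{c}$-vertices of a $G$-cycle in cyclic order produces exactly one cycle of $H$ (consecutive $S^{c}$-vertices are joined by $H$, possibly skipping one intervening $S$-vertex). This correspondence is a bijection between the cycles of $G$ and the cycles of $H$, so the two cycle counts agree; likewise each transient vertex of $G$ lying in $S^{c}$ stays transient under $H$, so the $S^{c}$-part of the core of $G$ is exactly the core of $H$.

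The crux of the argument---and the step I expect to be the main obstacle---is to prove that when $G$ is uniform over $\{f\colon f(S)\subset S^{c}\}$, the induced map $H$ is uniform over all mappings $S^{c}\to S^{c}$; granting this, $C_{n-k}(H)\stackrel{d}{=}C_{n-k}(F)$ and the lemma follows from $C_{n}(G)=C_{n-k}(H)$. To attack it I would condition on the restriction $G|_{S}$ and count, for a fixed target $h\colon S^{c}\to S^{c}$, the number of admissible $G|_{S^{c}}$ with $H=h$. For each $x\in S^{c}$ there are exactly $1+\absf{(G|_{S})^{-1}(h(x))}$ admissible values of $G(x)$ (the direct choice $G(x)=h(x)\in S^{c}$, plus one choice $G(x)=s$ for each $s\in S$ with $G(s)=h(x)$), so the conditional count factors as $\prod_{x\in S^{c}}\big(1+\absf{(G|_{S})^{-1}(h(x))}\big)$. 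The delicacy is that this product couples the coordinates of $H$ through the shared data $G|_{S}$: one must average $\prod_{x\in S^{c}}\big(1+\absf{(G|_{S})^{-1}(h(x))}\big)$ over all $G|_{S}\colon S\to S^{c}$, reorganize it by the preimage profile $(\absf{h^{-1}(t)})_{t\in S^{c}}$ of $h$ together with the occupancy numbers of $G|_{S}$, and show the result is independent of $h$. The difficulty is entirely caused by the $S^{c}\!\to\!S$ edges that $H$ must bypass: when $S^{c}$ happens to be $G$-invariant these edges are absent, $H=G|_{S^{c}}$ is manifestly uniform, and the lemma is immediate, so the general argument must establish that permitting such edges does not disturb the uniformity of $H$. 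This averaging is where all of the real work lies.
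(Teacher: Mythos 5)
You have reproduced the paper's own strategy exactly: your first-return map $H$ is precisely the map $r(G)$ in the paper's proof, and your argument that $C_{n}(G)=C_{n-k}(H)$ pointwise (every cycle of $G$ meets $S^{c}$, and deleting the $S$-vertices of each cycle gives a bijection between cycles of $G$ and cycles of $H$) is the same as the paper's, stated more carefully. Where the paper simply asserts that $r(G)$ is uniform over mappings $S^{c}\to S^{c}$, you correctly identify this as the crux, set up the preimage count, and leave it open. So, as written, your proposal has a genuine gap: everything rests on the uniformity of $H$, which you never establish.

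That gap cannot be closed, because the uniformity claim is false, and your own counting formula detects this. Take $n=3$, $k=1$, $S=\{1\}$, $S^{c}=\{2,3\}$. Your count of mappings $G$ with $G(S)\subset S^{c}$ inducing a given $h\colon S^{c}\to S^{c}$ is
$$N(h)=\sum_{g\colon S\to S^{c}}\ \prod_{x\in S^{c}}\bigl(1+\absf{g^{-1}(h(x))}\bigr),$$
which evaluates to $N(h)=5$ for each of the two constant maps and $N(h)=4$ for the identity and for the transposition (total $5+4+4+5=18=2\cdot3^{2}$, as it must be). So $H$ is not uniform: conditionally on $G|_{S}=g$, the values $\{H(x)\}_{x\in S^{c}}$ are i.i.d.\ with $\P(H(x)=t)=(1+\absf{g^{-1}(t)})/n$, which depends on $t$, and averaging over $g$ does not remove the bias, precisely because of the product coupling you pointed out. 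Worse, the lemma itself is false, so no repair of the argument is possible: since $C_{3}(G)=C_{2}(H)$ and the identity is the only self-map of $\{2,3\}$ with two cycles, $\P(C_{3}(G)=2)=4/18=2/9$, whereas a uniform $F\colon\{2,3\}\to\{2,3\}$ has $\P(C_{2}(F)=2)=1/4$. The same computation refutes the paper's own proof (its claim that the displayed conditional probability ``does not depend on $x_{1},\ldots,x_{k},y_{1},\ldots,y_{k}$'' fails), and it undermines the proof of Lemma \ref{cyclelem2}; the conclusion of that asymptotic lemma is plausibly still true, but it would require a different argument, e.g.\ analyzing the cycle counts of the i.i.d.\ but non-uniform mapping $H$ directly. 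To your credit, your instinct about where ``all of the real work lies'' was exactly right: had you carried out the averaging in the smallest case, you would have discovered that it fails.
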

\begin{proof}
Define  $r(G)\colon\{1,\ldots,n\}\setminus S\to\{1,\ldots,n\}\setminus S$ by
$$r(G)(x)=\begin{cases}
G(x) &,\,\mbox{if}\,G(x)\in S^{c}\\
G(G(x)) &,\,\mbox{if}\,G(x)\in S.  \end{cases}
$$
Since $G(S)\subset S^{c}$, if $G(x)\in S$ then $G(G(x))\in S^{c}$, so that $r(G)$ always takes values in $S^{c}$.

Note that $G$ and $r(G)$ have the same number of cycles, since $r(G)$ removes all elements of $S$ from all cycles of $G$, but each cycle in $G$ must have at least one element in $S^{c}$.  That is, $C_{n}(G)=C_{n}(r(G))$.

Also, $r(G)$ is a uniformly random mapping on $\{1,\ldots,n\}\setminus S$.  To see this, denote $S\equalscolon\{s_{1},\ldots,s_{k}\}$, let $x_{1},\ldots,x_{k}\in\{1,\ldots,n\}\setminus S$ and let $y_{1},\ldots,y_{k}\in\{1,\ldots,n\}$.  Then the conditional probability
$$\P(r(G)(s_{i})=x_{i}\,\,\forall\,1\leq i\leq k\,\,|\,\, G(x_{j})=y_{j}\,\,\forall\,1\leq j\leq k)$$
does not depend on $x_{1},\ldots,x_{k},y_{1},\ldots,y_{k}$.  So, we can remove the conditioning and conclude that
$$\P(r(G)(s_{i})=x_{i}\,\,\forall\,1\leq i\leq k)$$
does not depend on $x_{1},\ldots,x_{k}$.  That is, $r(G)$ is a uniformly random element of mappings from $\{1,\ldots,n\}\setminus S$ to itself.  Re-labeling $r(G)$ as $F$ completes the proof.
\end{proof}
\begin{lemma}[\embolden{Cycle Distribution of a Restricted Random Mapping}]\label{cyclelem2}
$$\P(C_{n}(f)>(1+t)\log (n-k)\,|\, f(S)\subset S^{c})\leq (1+o_{n}(1)e^{-\frac{t^{2}}{2+t}(\log (n-k))/4},\qquad\forall\, t>0.$$
\end{lemma}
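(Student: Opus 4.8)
The plan is to reduce Lemma \ref{cyclelem2} to Lemma \ref{uniformcycle} by means of the distributional identity already established in Lemma \ref{cyclerestrictlemma}. First I would let $G$ be a uniformly random mapping on $\{1,\ldots,n\}$ subject to $G(S)\subset S^{c}$, so that the conditional law of $C_{n}(f)$ given $f(S)\subset S^{c}$ is exactly the law of $C_{n}(G)$. The quantity on the left-hand side of the statement is therefore nothing other than $\P(C_{n}(G)>(1+t)\log(n-k))$.

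Next I would invoke Lemma \ref{cyclerestrictlemma}, which asserts that $C_{n}(G)$ and $C_{n-k}(F)$ are identically distributed, where $F$ is a uniformly random mapping from $\{1,\ldots,n\}\setminus S$ to itself. Since $\abs{\{1,\ldots,n\}\setminus S}=n-k$, any bijection relabeling $\{1,\ldots,n\}\setminus S$ onto $\{1,\ldots,n-k\}$ carries $F$ to a uniformly random mapping on $\{1,\ldots,n-k\}$ and leaves the number of cycles unchanged, since the number of cycles of a mapping is invariant under relabeling of the ground set. Thus $C_{n-k}(F)$ has the same distribution as the number of cycles of a uniformly random mapping on $\{1,\ldots,n-k\}$.

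Finally I would apply Lemma \ref{uniformcycle} directly to this random mapping on $n-k$ vertices. That lemma gives, for all $t>0$,
$$\P\big(C_{n-k}(F)>(1+t)\log(n-k)\big)\leq (1+o_{n-k}(1))e^{-\frac{t^{2}}{2+t}(\log(n-k))/4},$$
which is precisely the claimed bound once the chain of distributional equalities from the previous two paragraphs is substituted back in. All of the probabilistic content lives in Lemmas \ref{uniformcycle} and \ref{cyclerestrictlemma}; the present lemma is the bookkeeping step that assembles them.

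The only point requiring care, rather than a genuine obstacle, is the interpretation of the error factor. Lemma \ref{uniformcycle} produces a factor $1+o(1)$ as its argument tends to infinity, so applied with argument $n-k$ it yields $1+o_{n-k}(1)$; to rewrite this as the $1+o_{n}(1)$ appearing in the statement one needs $n-k\to\infty$, i.e. that $\alpha=k/n$ stays bounded away from $1$, which is the regime of the main application. I would flag this explicitly so that the asymptotic notation is unambiguous.
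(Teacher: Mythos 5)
Your proposal is correct and follows exactly the paper's route: the paper's proof of Lemma \ref{cyclelem2} is the single line ``Combine Lemmas \ref{uniformcycle} and \ref{cyclerestrictlemma},'' which you have simply unpacked (conditioning $\leftrightarrow$ law of $G$, then the distributional identity $C_{n}(G)\overset{d}{=}C_{n-k}(F)$, then Lemma \ref{uniformcycle} applied on $n-k$ vertices). Your closing remark that the $1+o_{n}(1)$ factor tacitly requires $n-k\to\infty$ is a legitimate clarification of the asymptotic notation that the paper leaves implicit, not a deviation from its argument.
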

\begin{proof}
Combine Lemmas \ref{uniformcycle} and \ref{cyclerestrictlemma}.
\end{proof}

\medskip
\noindent\textbf{Acknowledgement}.  Thanks to Richard Arratia for explaining to me various things such as the Joyal bijection (Lemma \ref{joyalbij}) and the improved R\'{e}nyi-Joyal bijection (Lemma \ref{arratiabij}), and for suggesting that these bijections could play a role in the main application (Lemma \ref{mainapp}).  Thanks also to Larry Goldstein for helpful discussions.

\bibliographystyle{amsalpha}
\bibliography{12162011}

\end{document}